\begin{document}

\newtheorem{theorem}{Theorem}
\newtheorem{proposition}[theorem]{Proposition}
\newtheorem{corollary}[theorem]{Corollary}
\newtheorem{lemma}[theorem]{Lemma}
\newtheorem{definition}[theorem]{Definition}
\newtheorem{remark}[theorem]{Remark}
\newtheorem{conjecture}[theorem]{Conjecture}

\newcommand\Z{{\mathbb Z}}
\newcommand\K{{\mathbb K}}
\newcommand\R{{\mathbb R}}
\newcommand\Sp{{\mathbb S}}
\newcommand\A{{\mathcal A}}
\newcommand\AV{{\mathcal {AV}}}
\newcommand\F{{\mathcal F}}
\newcommand\V{{\mathcal V}}
\newcommand\LL{{\mathcal L}}
\newcommand\dd[1]{\frac{\partial}{\partial #1}}
\newcommand\Ah{\A_{(h)}}
\newcommand\Der{\textup{Der\,}}
\newcommand\Stab{\textup{Stab}}
\newcommand\Span{\textup{Span}}
\newcommand\gl{\mathfrak{gl}}
\newcommand\sll{\mathfrak{sl}}
\newcommand\NN{{\mathcal N}}
\newcommand\bi{{\overline i}}
\newcommand\ti{{\widetilde i}}
\newcommand\tl{{\widetilde l}}

\title
[Gauge modules]
{Gauge modules for the Lie algebras of vector fields on affine varieties}
\author{Yuly Billig}
\address{School of Mathematics and Statistics, Carleton University, Ottawa, Canada}
\email{billig@math.carleton.ca}
\author{Jonathan Nilsson}
\address{Mathematical Sciences, Chalmers University of Technology, Sweden}
\email{jonathn@chalmers.se}
\author{Andr\'e Zaidan}
\address{Instituto de Matem\'atica e Estat\'istica, Universidade de S\~ao Paulo, Brazil}
\email{andrezaidan@gmail.com}

\maketitle

\begin{abstract}
For a smooth irreducible affine algebraic variety we study
a class of gauge modules admitting compatible actions
of both the algebra $A$ of functions and the Lie algebra
$\mathcal{V}$ of vector fields on the variety.
We prove that a gauge module
corresponding to a simple $\mathfrak{gl}_N$-module is irreducible
as a module over the Lie algebra of vector fields unless
it appears in the de Rham complex.
\end{abstract}

\section{Introduction}
David Jordan proved simplicity of an important class of infinite-dimensional Lie algebras -- Lie algebras of vector fields on smooth irreducible affine varieties 
\cite{J1} (see also \cite{J2} and \cite{BF3}). The structure of these algebras is very different from the case of simple finite-dimensional Lie algebras, it was shown in 
\cite{BF3} that Lie algebras of vector fields may contain no non-zero nilpotent or semisimple elements. For this reason, standard tools of roots and weights are not applicable in 
representation theory of this class of Lie algebras. The development of representation theory for the Lie algebras of vector fields on affine algebraic varieties was initiated in 
\cite{BN} and \cite{BFN}. These papers proposed to study a category of finite rank $\AV$-modules, i.e. modules that in addition to the action of the 
Lie algebra $\V$ of vector fields on an affine variety $X$ of dimension $N$, admit a compatible action of the commutative algebra $\A$ of polynomial functions on $X$ and 
are finitely generated as $\A$-modules.

 Motivated by a non-abelian gauge theory, \cite{BFN} introduced a family of finite rank $\AV$-modules, called gauge modules. There are two ingredients in the construction 
of a gauge module -- a finite-dimensional representation $U$ of a Lie algebra $\LL_+$, which is a subalgebra spanned by elements of non-negative degrees in 
$\textup{Der}(\K[t_1,\ldots,t_N])$, and the gauge fields $\{ B_i \}$ (see Section 3 for details). The main result of \cite{BFN} states that if $U$ is irreducible 
(in which case it is just a simple finite-dimensional irreducible $\mathfrak{gl}_N$-module), then the corresponding gauge module is irreducible as an $\AV$-module.

 The goal of the present paper is to investigate irreducibility of simple gauge $\AV$-modules as modules over the Lie algebra $\V$ of vector fields. We prove that a gauge 
 $\AV$-module corresponding to an irreducible finite-dimensional $\mathfrak{gl}_N$-module $U$ remains irreducible as a $\V$-module, unless $U$ is an exterior power of a natural 
$N$-dimensional $\mathfrak{gl}_N$-module. Exceptional $\AV$-modules appear in the de Rham complex, 
whereas the de Rham differential is a homomorphism of $\V$-modules (but not $\AV$-modules). 
For this reason, kernels and images of the de Rham differential are $\V$-submodules in the corresponding gauge modules. This result is a generalization of a theorem of Eswara Rao
\cite{E1} on irreducible tensor modules over the Lie algebra of vector fields on a torus.

 The idea of our proof is to recover $\A$-action from $\V$-action, trying to show that every $\V$-submodule in a gauge module is an $\AV$-submodule. This strategy fails precisely 
in the case of de Rham modules. One of the main technical tools we use is Hilbert's Nullstellensatz.

 We also show that simplicity of $U$ as a $\gl_N$-module is not a necessary condition for irreducibility of a gauge $\V$-module. We construct an example of a simple gauge module over the Lie algebra $W_1$ of vector fields on a circle with a $\gl_1$-module $U$ which decomposes in a direct sum of two submodules.
To prove irreducibility of this gauge module for $W_1$, we analyze the structure of this module as an
$\sll_2$-module.

 The structure of the paper is as follows. In Section 2 we recall the basics of the Lie algebras of vector fields on affine varieties and define the category of $\AV$-modules. 
In Section 3 we discuss the construction of gauge modules and prove that there could be at most $N+1$ exceptional simple $\mathfrak{gl}_N$-modules $U$ for which irreducibility over $\V$ 
fails. In Section 4 we discuss de Rham complex of gauge modules and show that exceptional $\mathfrak{gl}_N$-modules are exterior powers of the natural $N$-dimensional module. In Section 5 we explore connections between $\sll_2$-modules and gauge modules for $W_1$.

\subsection*{Acknowledgements}
Research of Y.B. is supported with a grant from the
Natural Sciences and Engineering Research Council of Canada. A.~Z. was partially supported by CAPES process 88881.133701/2016-01. J.N. and
A.Z. thank Carleton University for the hospitality during their visits.

\section{Definitions and Notations}

Let $\K$ be an algebraically closed field of characteristic 0, and let $X\subset \mathbb{A}_\K^n$ be a smooth
irreducible affine variety of dimension $N$. Let $I = \langle g_1,\ldots,g_m\rangle$ be the ideal of functions in $\K[x_1, \ldots, x_n]$ that vanish on $X$, 
let $\A = \K[x_1,\ldots, x_n]/I$ be the algebra of polynomial functions on $X$, and let $\mathcal{V} = \textup{Der}(\A)$ be the Lie algebra of vector fields on $X$.

An $\A\mathcal{V}$-module is a vector space $M$ equipped with module structures over both the commutative unital algebra $\A$ and over the Lie algebra $\mathcal{V}$ such that these structures are compatible in the following sense:

$$\eta\cdot(f\cdot m)=\eta(f)\cdot m +f\cdot (\eta\cdot m),$$
for all $\eta \in \mathcal{V}$, $f \in \A$, and $m \in M$.

We say that an $\AV$-module $M$ has finite rank if it is finitely generated as an $\A$-module.

Let $J$ be the Jacobian matrix $J = (\frac{\partial g_j}{\partial x_i})$. Let $r$ be the rank of $J$ over the field $\F$ of rational functions on $X$. Then $N = $ dim $X = n - r$.
Let $\{h_i\}$ be the set of non-zero $r\times r$ minors of $J$ and let $N(h_i) = \{ p \in X \,|\, h_i(p) \neq 0\}$. 
The Jacobian criterion for smoothness (see e.g.~\cite[Section I.5]{Ha}) states that
$\bigcup_i N(h_i) = X$. 
The Lie algebra $\V$ can be described as an $\A$-submodule of a free $\A$-module $\oplus_{i=1}^n \A \frac{\partial}{\partial x_i}$,
which is the kernel of the Jacobian matrix:
$\sum_{i=1}^n f_i\frac{\partial}{\partial x_i} \in \mathcal{V}$ 
if and only if $\sum_{i=1}^n f_i\frac{\partial g_j}{\partial x_i} = 0$ in $\A$ for all $j = 1, \ldots, m$ (see e.g. \cite{BF3}).

Fix a non-zero minor $h$ of $J$, let $\beta \subset \{1,\ldots,n\}$ be the set of columns of $J$ in $h$. Since rank $J$ is equal to $|\beta|$, 
we can solve the this system of linear equations over $\F$ treating $f_i, i \not\in \beta$ as free variables, and construct solutions for each $i \not\in \beta$
$$\tau_i=\frac{\partial}{\partial x_i}+\sum_{j \in \beta}f_{ij}\frac{\partial}{\partial x_j},$$
where $f_{ij} \in h^{-1} \A$, and hence $h\tau_i \in \mathrm{Der}(\A)$. Note that each $\tau_i$ is a derivation of the localized algebra $\A_{(h)}$ but not necessarily of $\A$.

We have the following definition from \cite{BN}:

\begin{definition}
We shall say that $t_1,\ldots, t_N \in \A$ are chart parameters in
the chart $N(h)$ provided that the following conditions are satisfied:
\begin{enumerate}
\item $t_1,\ldots, t_N$ are algebraically independent, so $\K[t_1,\ldots, t_N] \subset \A$.
\item Each element of $\A$ is algebraic over $\K[t_1,\ldots, t_N]$.
\item For each $i = 1, \ldots, N$, the derivation $\frac{\partial}{\partial t_i} \in \textup{Der}(\K[t_1,\ldots, t_N])$ extends to a derivation of the localized algebra $\A_{(h)}$.
\end{enumerate}
\end{definition}

This definition implies that 
$$\textup{Der}(\A_{(h)}) = \mathop\oplus_{i=1}^N \A_{(h)}\frac{\partial}{\partial t_i}.$$ 
Since $\textup{Der}(\A) \subset \textup{Der}(\A_{(h)})$,
each polynomial vector field $\eta$ on $X$ can be written as 
\[\eta=\sum_{i=1}^N f_i \frac{\partial}{\partial t_i}\]
where $f_i = \eta(t_i) \in \A$. Note that here $\frac{\partial}{\partial t_i}$ is interpreted as the unique extension of the partial derivative on $\K[t_1,\ldots, t_N]$
to a derivation of $\A_{(h)}$.

\begin{lemma}[\cite{BN}, Lemma 4]  We have that $\{x_i \, | \, i \notin \beta\}$ are chart parameters in the chart $N(h)$.
\end{lemma}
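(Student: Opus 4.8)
The plan is to verify the three defining conditions for the set $\{x_i \mid i \notin \beta\}$ (of which there are exactly $n - r = N$) to be chart parameters. Conditions (1) and (2) together amount to saying that these coordinates form a separating transcendence basis of $\F$ over $\K$, which I would extract from the rank of the Jacobian via Kähler differentials; condition (3) I would read off directly from the derivations $\tau_i$ already constructed.

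For conditions (1) and (2) I would work in the module $\Omega_{\F/\K}$ of Kähler differentials. Since $I = \langle g_1, \ldots, g_m\rangle$, this module is the quotient of $\bigoplus_{i=1}^n \F\, dx_i$ by the $\F$-span of the rows $dg_j = \sum_i \frac{\partial g_j}{\partial x_i}\, dx_i$, so $\dim_\F \Omega_{\F/\K} = n - \operatorname{rank}_\F J = n - r = N$. The minor $h$ is the determinant of the submatrix of $J$ on some set $R$ of $r$ rows and the columns $\beta$; since $h \neq 0$ in $\F$, that submatrix is invertible over $\F$, and the relations $dg_j = 0$ for $j \in R$ can be solved for $\{dx_k \mid k \in \beta\}$ in terms of $\{dx_i \mid i \notin \beta\}$. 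Hence the $N$ classes $\{dx_i \mid i \notin \beta\}$ span $\Omega_{\F/\K}$ and therefore form a basis. In characteristic $0$ a family of coordinates is a separating transcendence basis exactly when the corresponding differentials form a basis of $\Omega_{\F/\K}$; this yields the algebraic independence of $\{x_i \mid i \notin \beta\}$ (condition (1)) and that $\F$, and in particular every element of $\A$, is algebraic over $\K(x_i \mid i \notin \beta)$, equivalently over $\K[x_i \mid i \notin \beta]$ after clearing denominators (condition (2)).

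For condition (3), identify $t_1, \ldots, t_N$ with $\{x_i \mid i \notin \beta\}$ and fix $i \notin \beta$. The derivation $\tau_i = \frac{\partial}{\partial x_i} + \sum_{j \in \beta} f_{ij}\frac{\partial}{\partial x_j}$ already lies in $\Der(\A_{(h)})$. Applying it to a generator $x_{i'}$ with $i' \notin \beta$ gives $\tau_i(x_{i'}) = \delta_{i i'}$, because $x_{i'}$ does not involve the variables $x_j$ with $j \in \beta$. Thus $\tau_i$ and $\frac{\partial}{\partial t_i}$ are derivations agreeing on the generators of $\K[t_1, \ldots, t_N]$, so $\tau_i$ restricts to $\frac{\partial}{\partial t_i}$ on $\K[t_1, \ldots, t_N]$ and provides the desired extension to $\A_{(h)}$.

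The main obstacle is condition (1): the passage from the purely linear-algebraic fact that the minor $h$ is nonzero to the genuinely algebraic statement that $\{x_i \mid i \notin \beta\}$ are algebraically independent. This is exactly where characteristic $0$ (equivalently, separable generation of $\F/\K$) is essential, so that linear independence of the $dx_i$ is equivalent to algebraic independence of the $x_i$; one must also use that $I$ is generated by $g_1, \ldots, g_m$, so that the rows of $J$ account for all relations in $\Omega_{\F/\K}$. The remaining conditions are then comparatively routine.
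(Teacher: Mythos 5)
Your proof is correct. Note, however, that this paper never proves the lemma at all: it is imported verbatim from the reference (cited as [BN, Lemma~4]), so there is no internal proof to compare against, and your argument fills a gap that the authors delegate to a citation. Your route is the standard Kähler-differential argument: the presentation of $\Omega_{\F/\K}$ as $\bigl(\oplus_{i} \F \, dx_i\bigr)/\sum_j \F \, dg_j$ (valid because $I = \langle g_1,\ldots,g_m\rangle$ and $\A$ is a domain), invertibility over $\F$ of the $r\times r$ block of $J$ on the columns $\beta$, and the characteristic-zero equivalence between $\F$-linear independence of differentials and algebraic independence. This packages conditions (1) and (2) together via the separating-transcendence-basis criterion, and your verification of (3) is exactly right: the paper's setup already establishes $\tau_i \in \Der(\A_{(h)})$, and a derivation agreeing with $\frac{\partial}{\partial t_i}$ on the generators $t_{i'}$ agrees with it on all of $\K[t_1,\ldots,t_N]$ by the Leibniz rule. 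For comparison, there is a more elementary route, closer in spirit to the tools this paper sets up, which avoids $\Omega_{\F/\K}$ entirely: if $P$ were a nonzero polynomial of minimal total degree with $P(\{x_i\}_{i\notin\beta}) = 0$ in $\A$, applying $\tau_i$ and using $\tau_i(x_{i'}) = \delta_{i i'}$ gives $\frac{\partial P}{\partial x_i}(\{x_{i'}\}) = 0$ in $\A_{(h)}$, hence in $\A$; minimality forces each $\frac{\partial P}{\partial x_i}$ to vanish identically, so $P$ is constant (characteristic $0$), a contradiction — this proves (1), after which (2) follows because $\operatorname{trdeg}_\K \F = \dim X = N$ forces any $N$ algebraically independent elements to be a transcendence basis. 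Both arguments are sound; yours trades that minimal-degree induction for standard facts about Kähler differentials, at the cost of invoking slightly heavier machinery.
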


In this case a derivation
$$\eta = \sum_{i\notin\beta}f_i\frac{\partial }{\partial x_i}+\sum_{j\in\beta}f_j\frac{\partial }{\partial x_j}$$
when embedded in $\textup{Der}(\A_{(h)})$ will be written simply as 
$$\eta = \sum_{i\notin\beta} f_i \frac{\partial}{\partial x_i}$$ 
with the understanding that for $j \in \beta$ we have $\eta(x_j) = \sum_{i\notin\beta}f_i \partial x_j/\partial x_i =\sum_{i\notin\beta}f_i\tau_i(x_j)$. 
With this convention we have $h\frac{\partial}{\partial x_i} \in \mathcal{V}$ for every $i \notin\beta$.

\hspace{0cm}

\textbf{Example.} Let us take $X$ to be a 2-dimensional sphere $\mathbb{S}^2\subset \mathbb{A}^3$ with a defining ideal $I =\langle x^2+y^2+z^2-1\rangle$. Then the Jacobian matrix is 
$$J = \begin{pmatrix}
2x & 2y & 2z
\end{pmatrix}.$$

We have three minors $h_1 = x, h_2 = y$ and $h_3 = z$. Each corresponding chart is the sphere without a great circle $x = 0$, $y = 0$ or $z = 0$, respectively. 
Each point of $X$ belongs to at least one chart. Let us fix $h = z$ so that $x$ and $y$ are chart parameters. 
The partial derivative $\frac{\partial}{\partial x}$ of $\K[x,y]$ extends to a derivation of $\A_{(z)}$ as
$\tau_x = \frac{\partial}{\partial x}-\frac{x}{z} \frac{\partial}{\partial z}$, and if we multiply $\tau_x$ by $h = z$ 
we obtain $z\frac{\partial}{\partial x}-x \frac{\partial}{\partial z}$ which is a vector field on $X$. 
Treating $z$ as an implicit function of $x$ and $y$, we will write $\tau_x = \frac{\partial}{\partial x}$ with 
understanding that $\frac{\partial z}{\partial x} = -\frac{x}{z}$.

\section{Gauge Modules}

Let us recall a family of gauge $\AV$-modules that was introduced in \cite{BFN}. Let $\LL = \textup{Der}(\K[t_1,\ldots,t_N])$. This Lie algebra has a natural $\Z$-grading:
$\LL = \LL_{-1} \oplus \LL_0 \oplus \LL_1 \oplus \LL_2 \oplus \ldots$. We set $\LL_+$ to be a subalgebra of $\LL$ of elements of non-negative degree:
$\LL_+ = \LL_0 \oplus \LL_1 \oplus \LL_2 \oplus \ldots$. Note that $\LL_0$ is spanned by the elements $t_i \frac{\partial}{\partial t_j}$ and is isomorphic to $\mathfrak{gl}_N$.

\begin{definition} Let $(U, \rho)$ be a finite-dimensional $\LL_+$-module. \\
 Functions $B_i: \A_{(h)} \otimes U \rightarrow \A_{(h)} \otimes U$, $i = 1,\ldots,N$, are called gauge fields if
\begin{enumerate}
\item each $B_i$ is $\A_{(h)}$-linear,
\item $[B_i, \rho(\LL_+)] = 0$,
\item $[\frac{\partial}{\partial t_i} + B_i,\frac{\partial}{\partial t_j} + B_j ] = 0$ as operators on $\A_{(h)} \otimes U$ for all $i, j\in \{1,\ldots,N\}$.
\end{enumerate}
\end{definition}

Let $\{B_i\}$ be gauge fields, $i = 1,\ldots,N$. Then the space $\A_{(h)} \otimes U$ is an $\textup{Der}(\A_{(h)})$-module with the following action 
$$\left(f\frac{\partial}{\partial t_i}\right)\cdot(g\otimes u) = 
f\frac{\partial g}{\partial t_i}\otimes u +fg\otimes B_iu + 
\sum_{k \in \Z_+^N\backslash \{ 0 \} } \frac{1}{k!} g\frac{\partial^k f}{\partial t^k} \otimes \rho\left( t^k \dd{t_i} \right) u.$$
where $f, g \in \A_{(h)}$ and $u \in U$.

Identifying the Lie algebra vector fields $\mathcal{V}$ with its natural embedding into $\textup{Der}(\A_{(h)})$ and with the left $\A$-action by multiplication, we have that $\A_{(h)} \otimes U$ has the structure of an $\A\mathcal{V}$-module.

\begin{definition} An $\A\mathcal{V}$-submodule of $\A_{(h)}\otimes U$ which has finite rank over $\A$ 
is called a local gauge module. We say that an $\AV$-module M is a gauge module if it is isomorphic to a local gauge module for each chart $N(h)$ in our standard atlas.
\end{definition}

In this paper we will focus on the case when $U$ is an irreducible finite-dimensional $\LL_+$-module. 
It was shown in \cite{Bi} that such modules are just irreducible $\LL_0 \cong \gl_n$-modules on which $\LL_s$ with $s \geq 1$ act trivially. 
In this case the second axiom implies that the gauge fields $B_i$ are just functions in $\Ah$, and the third axiom of the gauge fields becomes 
$\frac{ \partial B_i} {\partial t_j} = \frac{ \partial B_j} {\partial t_i}$.

The action of $\Der (\Ah)$ on $\Ah \otimes U$ can be written as follows:
\begin{equation}\label{Epi}
\left(f\frac{\partial}{\partial t_i}\right)\cdot(g\otimes u) = 
f\frac{\partial g}{\partial t_i}\otimes u +fg B_i \otimes u + 
\sum_{p=1}^N  g \frac{\partial f}{\partial t_p} \otimes \rho\left( E_{pi} \right) u.
\end{equation}

Given a gauge module $(M, \varphi)$ and a closed 1-form $\omega \in \Omega^1 (X)$ on $X$,
we can define a new gauge module structure $\varphi_\omega$ on the space $M$.
Write $\omega$ in each chart $N(h)$ as
$\omega = P_1 dt_1 + \ldots + P_N dt_N$ with $P_i \in \Ah$. Since $\omega$ is closed, the functions $\{ P_i\}$ will satisfy
$\frac{ \partial P_i} {\partial t_j} = \frac{ \partial P_j} {\partial t_i}$. Then we define 
$\varphi_\omega$ in the chart $N(h)$ as
$$\varphi_\omega \left(f\frac{\partial}{\partial t_i}\right)\cdot m 
= \varphi \left(f\frac{\partial}{\partial t_i}\right)\cdot m + f P_i m .$$

If the form $\omega$ is exact, $\omega = d(G)$ for some $G \in \A$, then module $(M , \varphi_\omega)$ may be formally interpreted as the space $e^G M$ with the ``old'' action $\varphi$:
$$\varphi \left(f\frac{\partial}{\partial t_i}\right)\cdot e^G m
= e^G \varphi \left(f\frac{\partial}{\partial t_i}\right)\cdot m + e^G f \frac{ \partial G} {\partial t_j} m. $$

Of course, if $H^1_{dR} (X) \neq 0$ then not every closed 1-form is exact, hence we can not always interpret $\varphi_\omega$ as a formal shift of the algebra of functions by a formal factor $e^G$. 

Although we can construct new gauge modules by modifying the action with a help of a closed 1-form, an example of a family of rank 1 gauge modules for $X = \Sp^2$, given in \cite{BFN}, shows that 
we can not obtain all gauge modules in this way, starting from modules with zero gauge fields. 

We recall the main theorem of~\cite{BFN}:
\begin{theorem}\label{thm:bifuni}\cite[Theorem 24]{BFN} Let $X$ be a smooth irreducible affine algebraic variety and let $M$ be a gauge module which corresponds 
to a simple finite-dimensional $\mathfrak{gl}_N$-module $U$. Then $M$ is a simple $\A\mathcal{V}$-module.
\end{theorem}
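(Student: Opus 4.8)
My plan is to prove that every nonzero $\A\V$-submodule $W\subseteq M$ must coincide with $M$. The idea is to reconstruct the full $\gl_N$-action on the fibers of $M$ out of the combined $\A$- and $\V$-actions, to invoke simplicity of $U$ so that each fiber of $W$ is forced to be either $0$ or the entire fiber, and finally to stitch these pointwise statements together by means of Hilbert's Nullstellensatz. Throughout I would use that $M$ has finite rank over $\A$, so that $M$, $W$, and $M/W$ are finitely generated over the Noetherian domain $\A$.

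First I would fix a chart $N(h)$ with chart parameters $t_1,\ldots,t_N$, so that $h\,\dd{t_i}\in\V$ for every $i$. For a point $p\in N(h)$, so $h(p)\neq 0$, with maximal ideal $\mathfrak{m}_p\subset\A$, I consider the fiber $M(p)=M/\mathfrak{m}_p M$, which at a generic $p$ is identified with $U$; a submodule $W$ then determines its image $W(p)\subseteq M(p)$. The central computation is to recover the $\gl_N$-action on $M(p)$. Writing $c_q=t_q(p)$, the field $\xi_{qi}=(t_q-c_q)\,h\,\dd{t_i}$ lies in $\V$ (as $\V$ is an $\A$-module) and vanishes as a tangent vector at $p$, hence preserves $\mathfrak{m}_p$ and descends to an operator on $M(p)$. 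Applying \eqref{Epi} to $\xi_{qi}\cdot(1\otimes u)$ and reducing modulo $\mathfrak{m}_p M$, every term carrying the factor $t_q-c_q$ dies, and the only surviving summand is the one coming from $\partial t_q/\partial t_q$, leaving $h(p)\otimes\rho(E_{qi})u$. Since $h(p)\neq 0$, the operators $\xi_{qi}$ realize all of $\rho(E_{qi})$ on the fiber, so $W(p)$ is a $\gl_N$-submodule of $U$. Simplicity of $U$ then gives $W(p)\in\{0,U\}$ for generic $p$.

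To globalize, suppose $W\neq 0$ and pick $0\neq w=\sum_k g_k\otimes u_k\in W$ with the $u_k$ linearly independent; the locus where some $g_k$ is nonzero is a dense open set, on which the image of $w$ is nonzero, forcing $W(p)=U$ there. Hence $M/W$ has vanishing fiber on a dense open set, its support is a proper closed subvariety, and so $I:=\mathrm{Ann}_\A(M/W)$ is a nonzero ideal. The compatibility axiom makes $I$ stable under $\V$: if $a\in I$ then for every $\eta\in\V$ and $m\in M/W$ we get $0=\eta\cdot(am)=\eta(a)m+a(\eta\cdot m)=\eta(a)m$, so $\eta(a)\in I$. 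But a nonzero $\V$-stable ideal of $\A$ cannot be proper, since its zero set is a $\V$-invariant closed set, while at any point, working in a chart, the fields $h\,\dd{t_i}$ span the whole tangent space, so no nonempty proper invariant closed subset can exist. By the Nullstellensatz $I=\A$, whence $M/W=0$ and $W=M$. Therefore $M$ is a simple $\A\V$-module.

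The step I expect to be the main obstacle is the fiber computation: one must exhibit vector fields in $\V$ whose reductions on $M(p)$ are precisely the generators $\rho(E_{qi})$ of the $\gl_N$-action, while verifying both that each such field genuinely descends to $M(p)$ and that the identification $M(p)\cong U$ holds at generic $p$ — recall that $M$ is only a finite-rank $\A$-submodule of $\Ah\otimes U$, so the fibers may degenerate on a proper closed subset, and the argument must be confined to the generic locus. Once the generic fiber is pinned down as the simple $\gl_N$-module $U$, the Nullstellensatz finish is routine.
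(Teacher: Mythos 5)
This theorem is not proved in the paper at all: it is imported verbatim from \cite[Theorem 24]{BFN}, so there is no internal proof to measure your argument against. Judged on its own terms, your strategy --- recover the $\gl_N$-action on fibers from vector fields vanishing at a point, invoke simplicity of $U$, and globalize via a $\V$-stable annihilator ideal and the Nullstellensatz --- is the natural one for the $\A\V$ setting and can be made into a complete proof; but as written it has genuine gaps at the two places you yourself flag, plus one more at the end.

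(a) Your fiber computation is performed on $1\otimes u$, which in general does not lie in $M$: a gauge module is an arbitrary finite-rank $\A\V$-submodule of $\Ah\otimes U$, not the whole space. Relatedly, discarding ``terms carrying the factor $t_q-c_q$'' modulo $\mathfrak{m}_pM$ is unjustified: the term $(t_q-c_q)hB_i\otimes u$ lies in $\mathfrak{m}_p(\Ah\otimes U)$ but not visibly in $\mathfrak{m}_pM$, since $hB_i\otimes u$ need not belong to $M$. The clean repair avoids $M(p)$ entirely: from \eqref{Epi} one gets, for every $m\in\Ah\otimes U$, the identity $\bigl((t_q-c_q)h\dd{t_i}\bigr)\cdot m=(t_q-c_q)\cdot\bigl((h\dd{t_i})\cdot m\bigr)+\Theta_{qi}m$, where $\Theta_{qi}$ is the $\Ah$-linear operator $g\otimes u\mapsto gh\otimes\rho(E_{qi})u$. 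The first two actions preserve any $\A\V$-submodule $W$, hence so does $\Theta_{qi}$; composing with the evaluation $\mathrm{ev}_p:\Ah\otimes U\to U$ (the fiber of the \emph{ambient} module, on which $h$ acts by $h(p)\neq0$) shows that $\mathrm{ev}_p(W)\subseteq U$ is a $\gl_N$-submodule, for every $p\in N(h)$, with no genericity needed. (b) Your globalization conflates $\mathrm{ev}_p$-images with fibers of $M/W$: from $\mathrm{ev}_p(W)=U$ one cannot conclude that $(M/W)/\mathfrak{m}_p(M/W)=0$, because $M\cap\mathfrak{m}_p(\Ah\otimes U)$ may be strictly larger than $\mathfrak{m}_pM$. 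The correct bridge is a rank argument: if $w_1,\ldots,w_s$ generate $W$ over $\A$ and $r=\dim_\F(\F W)$, then away from the poles of finitely many rational coefficients one has $\dim_\K\mathrm{ev}_p(W)\le r$; combined with $\mathrm{ev}_p(W)=U$ on a dense open set this gives $\operatorname{rank}_\A W=\dim U=\operatorname{rank}_\A M$, so $M/W$ is a finitely generated torsion $\A$-module and $I=\mathrm{Ann}_\A(M/W)\neq0$. (Your verification that $I$ is $\V$-stable is correct.) (c) The final assertion that a nonzero $\V$-stable ideal of $\A$ equals $\A$ is true, but ``no nonempty proper invariant closed subset can exist'' is not yet an algebraic argument; one needs the standard completion argument at a smooth point: for $a\in I$ and $p\in V(I)$, all iterated $(h\dd{t_i})$-derivatives of $a$ stay in $I\subseteq\mathfrak{m}_p$, and since $h(p)\neq0$ and $\A$ embeds into $\K[[t_1-c_1,\ldots,t_N-c_N]]$ in characteristic $0$, this forces $a=0$ (cf.\ \cite{J1}, \cite{BF3}). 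With (a)--(c) repaired your proof is complete. It is worth noting that your key device --- the discrepancy between $f\cdot(\eta\cdot m)$ and $(f\eta)\cdot m$ --- is precisely what is unavailable in the problem the paper itself addresses (simplicity over $\V$ alone), which is why Lemma~\ref{lem:omega1} and Proposition~\ref{submoduleinvariant} must instead compose vector fields and extract the $\gl_N$-operators by Vandermonde arguments.
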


The main goal of this paper is to investigate simplicity of these modules over $\V$. We are going to show that gauge modules remain irreducible as $\V$-modules, 
unless $U$ is an exterior power of the natural $N$-dimensional $\gl_N$-module.
This will be done by reconstructing the $\A$-action from the $\mathcal{V}$-action.

For the rest of the paper we will assume that $M$ is a gauge module which corresponds 
to a simple finite-dimensional $\mathfrak{gl}_N$-module $U$.

We shall fix the standard generators of the centre of $U(\mathfrak{gl}_N)$. Letting
\[\Omega_k = \sum_{i_1,\ldots, i_k=1}^N E_{i_1i_2}E_{i_2i_3}\cdots E_{i_ki_1}\] we have $Z(\mathfrak{gl}_N)=\K[\Omega_1,\ldots, \Omega_N]$. 
\begin{lemma}\label{lem:omega1}
 If a gauge module $M$ is reducible as a $\mathcal{V}$-module, then $\Omega_1$ acts on $U$ by a scalar from the set $\{0,1,\ldots,N\}$.
\end{lemma}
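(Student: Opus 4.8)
The plan is to prove the contrapositive: assuming the central scalar by which $\Omega_1$ acts on $U$ avoids $\{0,1,\ldots,N\}$, I would show that every nonzero $\V$-submodule of $M$ is in fact an $\AV$-submodule, whence by Theorem~\ref{thm:bifuni} it must equal $M$, so $M$ is $\V$-irreducible. That $\Omega_1$ acts by a single scalar $\lambda$ is automatic from Schur's lemma, since $\Omega_1$ is central in $U(\gl_N)$ and $U$ is irreducible; the whole content is the restriction $\lambda\in\{0,1,\ldots,N\}$. Let $\mathcal U\subseteq\operatorname{End}_{\K}(M)$ be the associative algebra generated by the operators $\eta\cdot$ for $\eta\in\V$. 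Since any $\V$-submodule $W$ is stable under each $\eta\cdot$, it is stable under all of $\mathcal U$; so it suffices to exhibit, for each chart minor $h$ and each $a\in\A$, the multiplication operator $(ah)\cdot$ inside $\mathcal U$, at least after multiplication by the scalar $\prod_{c=0}^{N}(\lambda-c)$.

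The basic computation I would start from records the failure of $\A$-linearity of the $\V$-action. Writing $\rho(E_{pi})$ also for the operator $g\otimes u\mapsto g\otimes\rho(E_{pi})u$ on $M$, a direct application of \eqref{Epi} to $h\tfrac{\partial}{\partial t_i}\in\V$ and to $ah\tfrac{\partial}{\partial t_i}\in\V$ (using that $\V$ is an $\A$-module and that $t_j\in\A$) gives the operator identity
\[
\Big(ah\tfrac{\partial}{\partial t_i}\Big)\cdot\;-\;a\cdot\Big(\big(h\tfrac{\partial}{\partial t_i}\big)\cdot\Big)
=\sum_{p=1}^{N}\Big(h\tfrac{\partial a}{\partial t_p}\Big)\cdot\rho(E_{pi}),
\]
and, specializing $a=t_j$,
\[
h\cdot\rho(E_{ji})=\Big(t_jh\tfrac{\partial}{\partial t_i}\Big)\cdot\;-\;t_j\cdot\Big(\big(h\tfrac{\partial}{\partial t_i}\big)\cdot\Big).
\]
Thus the combination $h\cdot\rho(E_{ji})+t_j\cdot\big(h\tfrac{\partial}{\partial t_i}\big)\cdot$ equals the single vector-field action $\big(t_jh\tfrac{\partial}{\partial t_i}\big)\cdot$ and so lies in $\mathcal U$; summing the diagonal $j=i$ produces $\lambda\,(h\cdot)+\sum_i t_i\cdot\big(h\tfrac{\partial}{\partial t_i}\big)\cdot\in\mathcal U$. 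This already isolates the scalar $\lambda$, but the unwanted terms $t_j\cdot(\,\cdots\,)$ — multiplication operators composed with vector-field actions — are not themselves in $\mathcal U$ and must be removed.

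To eliminate these terms and to force the appearance of $\prod_{c=0}^N(\lambda-c)$, I would iterate the two displayed identities and antisymmetrize over the direction indices. The key structural point is that the $E_{ji}$ here implement a $\gl_N$-action on the $N$-dimensional space of directions $\tfrac{\partial}{\partial t_1},\ldots,\tfrac{\partial}{\partial t_N}$; antisymmetrizing an $(N+1)$-fold product of operators of the form $h\cdot\rho(E_{j\,\bullet})$ (together with the Euler-type corrections coming from the second display) forces a $\Lambda^{N+1}$ of an $N$-dimensional space to appear, which both annihilates the mixed multiplication-by-$t_j$ corrections by symmetry and collapses the central weight into the polynomial $\lambda(\lambda-1)\cdots(\lambda-N)$. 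The outcome I aim for is an identity
\[
\prod_{c=0}^{N}(\lambda-c)\cdot\big((ah)\cdot\big)\;\in\;\mathcal U\qquad\text{for every }a\in\A,
\]
valid in each chart $N(h)$.

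Granting this, the conclusion is quick. If $\lambda\notin\{0,1,\ldots,N\}$ then $\prod_{c=0}^N(\lambda-c)\neq0$, so $(ah)\cdot$ preserves $W$ for every $a\in\A$ and every chart minor $h=h_i$. Since the minors have no common zero on $X$ (the charts cover $X$), Hilbert's Nullstellensatz yields $c_i\in\A$ with $\sum_i c_ih_i=1$; hence for any $g\in\A$ and $w\in W$ we get $g\cdot w=\sum_i (gc_i)h_i\cdot w\in W$, so $W$ is an $\AV$-submodule and Theorem~\ref{thm:bifuni} forces $W=M$. I expect the genuine difficulty to lie entirely in the middle step: organizing the iterated products and the antisymmetrization so that every term outside $\mathcal U$ cancels and the surviving scalar is \emph{exactly} $\prod_{c=0}^N(\lambda-c)$ rather than some other polynomial of degree $N+1$. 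Pinning down that the exceptional values $0,1,\ldots,N$ are precisely the exterior-power weights is then the role of the subsequent de Rham analysis.
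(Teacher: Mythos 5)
Your overall frame --- proving the contrapositive by recovering the $\A$-action on an arbitrary $\V$-submodule, patching over charts with the Nullstellensatz, and invoking Theorem~\ref{thm:bifuni} --- is exactly the paper's, and your two first-order identities are correct. But the entire proof lives in the step you defer, namely producing $\prod_{c=0}^{N}(\lambda-c)\,\bigl((ah)\cdot\bigr)\in\mathcal U$, and the mechanism you sketch for it does not work. First, antisymmetrizing over $N+1$ direction indices ranging in an $N$-element set kills \emph{every} term of the expansion at the level of symbols --- including the pure $\rho(E)$-term you want to keep --- so the actual identity obtained this way is a relation among the commutator corrections (a Capelli-type identity), which you never compute; the assertion that the mixed terms $t_j\cdot\bigl((h\tfrac{\partial}{\partial t_i})\cdot\bigr)$ ``cancel by symmetry'' is unjustified, since $t_j\cdot$, $(h\tfrac{\partial}{\partial t_i})\cdot$ and $h\cdot\rho(E_{ji})$ do not commute. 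Second, and more fundamentally, the surviving coefficient cannot be expected to be a function of $\lambda$ alone. A universal identity of your type has a coefficient in $U(\gl_N)$, and for it to act by a scalar on every irreducible $U$ that coefficient must be central; the central elements produced by antisymmetrization are of Capelli/characteristic-identity type, whose eigenvalues are symmetric functions of the individual (shifted) entries of the highest weight, not of their sum $\lambda$ (already for $\gl_2$ they involve $\mu_1\mu_2$, which separates weights with equal $\mu_1+\mu_2$). The central elements that genuinely do arise from vector-field manipulations are the paper's $P_k=\widehat{\Omega}_k-\frac{(N+k-1)!}{N!}\Omega_1$ of Proposition~\ref{submoduleinvariant}, and their common zero locus contains one central character for \emph{every} value of $\Omega_1$; hence no combination of such identities can vanish only when $\Omega_1\in\{0,1,\ldots,N\}$. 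This is precisely why the paper cannot, and does not, prove the present lemma by a central-scalar identity.

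The idea your proposal is missing is that one should \emph{not} insist on a central coefficient. The paper composes just two vector fields, $((t_i+c)^{2-r}fh\tfrac{\partial}{\partial t_i})\circ((t_i+c)^{r}h\tfrac{\partial}{\partial t_i})$, expands in the auxiliary exponent $r$, and extracts the $r^2$-coefficient by a Vandermonde argument, concluding that the \emph{non-central} operators $fh^2(E_{ii}^2-E_{ii})$ preserve $M'$; the Nullstellensatz applied to $\{h_j^2\}$ then removes $h^2$. It then decomposes $U$ into joint eigenspaces of the commuting family $\{E_{ii}^2-E_{ii}\}_{i=1}^{N}$ and uses a second Vandermonde argument to show each joint eigencomponent of any $m\in M'$ lies in $M'$. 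If the joint kernel were trivial, every component could be multiplied by any $f\in\A$ inside $M'$, making $M'$ an $\AV$-submodule and contradicting Theorem~\ref{thm:bifuni}; so $U$ has a weight all of whose entries lie in $\{0,1\}$, and centrality of $\Omega_1$ converts that single weight into $\lambda\in\{0,1,\ldots,N\}$. This spectral, weight-theoretic step is what replaces your (almost certainly unobtainable) scalar identity, and it is the actual content of the proof.
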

\begin{proof}
Let $M^\prime$ be a nontrivial $\V$-submodule in $M$. 
Let us fix one of the charts $\NN(h)$ of $X$ with its chart parameters.
Let $f \in\A$, $i \in \{1,2,\ldots,N\}$, $c \in k$, and $0 \leq r \leq 2$.
Consider the composition of the actions of vector fields from $\V$ on $\Ah \otimes U$:
\[((t_i+c)^{2-r} f h\frac{\partial}{\partial t_i}) \circ ((t_i+c)^r h\frac{\partial}{\partial t_i}) (g \otimes u). \]
When we expand this expression using (\ref{Epi}), we will get a quadratic polynomial in $r$. Using a Vandermonde determinant argument, we conclude
that $M^\prime$ is invariant under the terms that correspond to each power of $r$.

The operator that corresponds to $-r^2$ is 
\begin{equation*}
(t_i + c)^2 f h^2 \left(E_{ii}^2-E_{ii}\right). 
\end{equation*} 

Since $c$ here is arbitrary, we conclude that $M^\prime$ is invariant under
\begin{equation}
f h^2 \left(E_{ii}^2-E_{ii}\right). \label{Eii} 
\end{equation} 

Since $X$ is smooth, the set of functions $\{ h_j \}$ determining the charts of $X$, has no common zeros on $X$. By Hilbert's Nullstellensatz, 
the ideal generated by $\{ h_j^2 \}$ contains $1$. Hence, $M^\prime$ is invariant under $E_{ii}^2-E_{ii}$, and more generally $f (E_{ii}^2-E_{ii})$ for any $f \in \A$. 

Consider the decomposition of $U$ into the joint eigenspaces for the family of commuting diagonalizable operators $E_{ii}^2-E_{ii}$, $i = 1, \ldots N$. 

For $m \in M^\prime$ consider its expansion in the joint eigenvectors for $E_{ii}^2-E_{ii}$, $m = \sum_\alpha m_\alpha$. 
By a Vandermonde argument, each component $m_\alpha$ is in $M^\prime$. 

Let us assume that the zero eigenspace is trivial, it means that
for each component $m_\alpha$ there exists at least one $i$ such that $E_{ii}^2-E_{ii}$ is acting by a non-zero scalar, 
hence for every function $f \in \A$ we get $f m_\alpha \in M^\prime$. This implies that $f m \in M^\prime$ and $M^\prime$ is in fact an $\AV$-submodule in $M$.
But Theorem \ref{thm:bifuni} states that $M$ is irreducible as an $\AV$-module. We conclude that the zero eigenspace is in fact nontrivial. 
This means that each $E_{ii}$ is acting on this space as either $0$ or $1$. 
But then $\Omega_1 = \sum_{i=1}^N E_{ii}$ acts on $U$ by a scalar from $\{0,1,\ldots,N\}$.
\end{proof}

We will use the notation $I_N = \{1, 2, \ldots, N\}$ below.

\begin{proposition} \label{submoduleinvariant}
Let $k\geq 2$. For every $f \in \A$ any $\V$-submodule $M^\prime$ in $M$ is invariant under the action of
$$f  \left( \sum_{i \in I_N^k} \sum_{\sigma \in S_k}E_{i_{\sigma(1)} i_1}E_{i_{\sigma(2)} i_2}\ldots E_{i_{\sigma(k)} i_k}-\frac{\left(N+k-1\right)!}{N!}\Omega_1\right).$$
\end{proposition}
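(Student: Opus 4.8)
The plan is to generalize the two-factor computation in the proof of Lemma~\ref{lem:omega1} to a product of $k$ vector fields, to read off the operator in the statement (times a power of $h$) as the top-order term of this product, and to remove the power of $h$ by Hilbert's Nullstellensatz. The engine is the following elaboration of the Lemma's device. Working in a fixed chart $\NN(h)$ with chart parameters $t_1,\ldots,t_N$, for any $a,b\in I_N$, any $f\in\A$ and any nonnegative integer $r$ the operator $(t_a+c)^r fh\dd{t_b}$ lies in $\V$, and by~\eqref{Epi} its third term contains
$$r\,(t_a+c)^{r-1}fhg\otimes E_{ab}u$$
as the unique summand carrying the factor $r$; here $f$ is undifferentiated, so it rides along as a multiplier, and the row index $a$ is decoupled from the direction $b$, which is what lets us produce off-diagonal matrix units $E_{ab}$.

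Next I would form, for each multi-index $i\in I_N^k$ and each $\sigma\in S_k$, the composition of the $k$ vector fields $(t_{i_{\sigma(l)}}+c_l)^{r_l}f^{\delta_{l1}}h\dd{t_{i_l}}$, $l=1,\ldots,k$, and sum over $i$ and $\sigma$; this sum preserves any $\V$-submodule $M'$. As in the Lemma, I would arrange the exponents so that the composition depends polynomially on the parameters $r_1,\ldots,r_k$, and then isolate by a Vandermonde/finite-difference argument the coefficient of the top multidegree $r_1\cdots r_k$. In that coefficient every factor must contribute its $r_l$-term above, so it equals $fh^k\sum_{i\in I_N^k}\sum_{\sigma\in S_k}E_{i_{\sigma(1)}i_1}\cdots E_{i_{\sigma(k)}i_k}$ acting on $g\otimes u$, which is $fh^k$ times the symmetric sum in the statement; the $B$-terms and all terms in which some power $(t_{i_{\sigma(l)}}+c_l)$ survives undifferentiated carry a strictly smaller power of the parameters and are killed by the extraction.

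The principal difficulty is to account for the ``cascade'' corrections, in which a derivative from an outer factor lands on a power already lowered by an inner one --- the phenomenon that turned $E_{ii}^2$ into $E_{ii}^2-E_{ii}$ in the Lemma --- and to show that, together with the coincidences among the row indices, they contribute exactly $-\frac{(N+k-1)!}{N!}\Omega_1$. I would organize this by induction on $k$, anchored by Lemma~\ref{lem:omega1} and the identity $\Omega_1^2+\Omega_2-(N+1)\Omega_1$ for $k=2$, subtracting the already-established operators of the smaller cases to kill all intermediate numbers of $E$-factors and reducing the surviving products of matrix units to the Gelfand generators $\Omega_1,\ldots,\Omega_k$. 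The rising factorial $\frac{(N+k-1)!}{N!}=(N+1)(N+2)\cdots(N+k-1)$ should then materialize from the $k-1$ successive trace contractions $\sum_p 1=N$ arising each time a cascade identifies two neighbouring indices, combined with the falling factorials produced by repeated differentiation of a single power. Verifying that these two combinatorial effects conspire to give precisely this coefficient, with all higher $\Omega_j$ cancelling, is the step I expect to be the main obstacle.

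Finally I would conclude as in Lemma~\ref{lem:omega1}. The argument above yields, for every $f\in\A$ and in every chart, the invariance of $M'$ under $fh^k\big(\sum_i\sum_\sigma E_{i_{\sigma(1)}i_1}\cdots E_{i_{\sigma(k)}i_k}-\frac{(N+k-1)!}{N!}\Omega_1\big)$. Since $X$ is smooth the defining functions $\{h_j\}$ of the charts have no common zero, so by the Nullstellensatz $1\in\langle h_j^k\rangle$; writing $f=\sum_j(fa_j)h_j^k$ and using invariance under each $fa_j\,h_j^k(\cdots)$ then upgrades this to invariance of $M'$ under $f\big(\sum_i\sum_\sigma E_{i_{\sigma(1)}i_1}\cdots E_{i_{\sigma(k)}i_k}-\frac{(N+k-1)!}{N!}\Omega_1\big)$, which is the assertion of the Proposition.
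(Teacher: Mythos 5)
Your skeleton --- compose $k$ parameter-dependent vector fields, extract a coefficient by a Vandermonde argument, and remove $h^k$ via the Nullstellensatz applied to $\{h_j^k\}$ --- is the same as the paper's, and your closing Nullstellensatz step is correct. But the two technical pivots of your plan fail. First, the family $(t_{i_{\sigma(l)}}+c_l)^{r_l}f^{\delta_{l1}}h\dd{t_{i_l}}$, $l=1,\ldots,k$, with distinct shifts $c_l$ and distinct base variables, cannot be ``arranged so that the composition depends polynomially on $r_1,\ldots,r_k$'': expanding via (\ref{Epi}), every term carries a function $\prod_l(t_{i_{\sigma(l)}}+c_l)^{r_l-j_l}$ whose exponents still involve the parameters, and since the bases are distinct no choice of exponents makes this dependence cancel; the object you want to apply Vandermonde to is an exponential-polynomial in the $r_l$, not a polynomial. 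The paper's construction exists precisely to avoid this: all $k$ factors share one shift $c\in\K^N$ and have coefficients $(t+c)^{n_l r}=\prod_j(t_j+c_j)^{n_lr_j}$ built from a single multi-index $r$, with the outermost exponent $s+\sum_l\epsilon_{i_l}-(n_2+\cdots+n_k)r$ chosen so that the total degree is constant (telescoping); the off-diagonal units $E_{qi_l}$ then come from which variable $t_q$ the derivative hits, not from the choice of base variable in each factor. Second, even granting the extraction, your claim that the coefficient of $r_1\cdots r_k$ is the clean sum $fh^k\sum_i\sum_\sigma E_{i_{\sigma(1)}i_1}\cdots E_{i_{\sigma(k)}i_k}$ is false: a factor $r_l$ can also be produced when an \emph{outer} operator's first term in (\ref{Epi}) differentiates $(t_{i_{\sigma(l)}}+c_l)^{r_l}$ sitting in the state, yielding a scalar factor with no matrix unit. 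Already for $k=2$ this is why Lemma \ref{lem:omega1} produces $E_{ii}^2-E_{ii}$ rather than $E_{ii}^2$. These cross terms do contribute to the top monomial; they are not ``killed by the extraction.''

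Those cross terms are exactly where $-\frac{(N+k-1)!}{N!}\Omega_1$ comes from, and your plan for them --- the step you yourself call ``the main obstacle'' --- is both unproved and aimed at the wrong mechanism. The coefficient does not arise from trace contractions $\sum_p 1=N$ in cascades; it arises combinatorially. The paper first extracts the coefficient of the monomial $n_2^2n_3\cdots n_k$, obtaining the product (\ref{js}) whose first factor $\bigl(r_{i_1}-\sum_q r_qE_{qi_1}\bigr)$ carries the only scalar part; then the coefficient of $r_{i_1}\cdots r_{i_k}$ in (\ref{js}) equals $fh^k(\omega_i-\omega_{\bi})$, where $\omega_i=\frac{1}{|\Stab(i)|}\sum_\sigma E_{i_{\sigma(1)}i_1}\cdots E_{i_{\sigma(k)}i_k}$ and $\bi=(i_2,\ldots,i_k)$; a telescoping recursion over truncated sequences gives invariance under $fh^k(\omega_i-E_{i_ki_k})$; and finally multiplying by $|\Stab(i)|$, summing over $i\in I_N^k$, and using the orbit count $\sum_i|\Stab(i)|=\binom{N+k-1}{k}\,k!=\frac{(N+k-1)!}{(N-1)!}$ together with symmetry in the value of $i_k$ yields the correction $\frac{1}{N}\cdot\frac{(N+k-1)!}{(N-1)!}\,\Omega_1=\frac{(N+k-1)!}{N!}\,\Omega_1$. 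So the rising factorial is an $S_k$-orbit count on $I_N^k$, and no higher Casimirs ever appear, so there is nothing to cancel; without a substitute for this two-stage extraction and recursion, your induction has no anchor and the central identity of the proposition remains unproved.
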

\begin{proof}
 Fix $s$ to be a large enough element in $\Z_+^N$ (it should be large enough for the Vandermonde arguments we use below to work, $s_j \geq k^3$ for all $j$ will suffice).
Let $c \in \K^N$ and $f \in \A$. Let $r \in \Z_+^N$ with $0 \leq r_j \leq k$ for all $1\leq j \leq N$.
Denote by $\{ \epsilon_1, \ldots, \epsilon_N\}$ is the standard basis of $\Z^N$.

Consider the action of
\[((t+c)^{s + \sum_{l=1}^k\epsilon_{i_l} - (n_2 + \ldots + n_k) r} f h\frac{\partial}{\partial x_{i_1}}) \circ ((t+c)^{n_2 r}h\frac{\partial}{\partial x_{i_2}}) \circ \cdots 
\circ ((t+c)^{n_k r}h\frac{\partial}{\partial x_{i_k}}).\]

As in the Lemma \ref{lem:omega1} we can expand this expression into a polynomial in $n_2, \ldots, n_k$ by applying (\ref{Epi}).
By a Vandermonde argument, $M^\prime$ is invariant under the action of each term in this polynomial. 
The action of the term corresponding to the monomial $n_2^2 n_3\ldots n_k$ is given by

\begin{equation*}
(t+c)^s f h^k \left(r_{i_1}-\sum_{q_1} r_{q_1}E_{q_1 i_1}\right)\left(\sum_{q_2} r_{q_2}E_{q_2 i_2}\right)\ldots\left(\sum_{q_k} r_{q_k}E_{q_k i_k}\right)
\end{equation*}

Since $c$ is arbitrary, $M^\prime$ is also invariant under
\begin{equation}
f h^k \left(r_{i_1}-\sum_{q_1} r_{q_1}E_{q_1 i_1}\right)\left(\sum_{q_2} r_{q_2}E_{q_2 i_2}\right)\ldots\left(\sum_{q_k} r_{q_k}E_{q_k i_k}\right)\label{js}
\end{equation}

Consider a sequence $i = (i_1, \ldots, i_k) \in I_N^k$. The permutation group $S_k$ acts naturally on the set $I_N^k$. Let us denote by $\Stab(i)$ the stabilizer of $i$ in $S_k$.
Set 
$$\omega_i = \frac{1}{|\Stab(i)|}\sum_{\sigma \in S_k} E_{i_{\sigma(1)} i_1}E_{i_{\sigma(2)} i_2} \ldots E_{i_{\sigma(k)} i_k}.$$ 
For a sequence $i \in I_N^k$ denote by $\bi$ the truncated sequence $\bi = (i_2, \ldots, i_k) \in I_N^{k-1}$. 
Then the coefficient of $-r_{i_1} r_{i_2} \ldots r_{i_k}$ in (\ref{js}) is $f h^k (\omega_i - \omega_\bi)$ and $M^\prime$ is invariant under the action of this operator.
By recursion on $k$ we conclude that $M^\prime$ is invariant under $f h^k (\omega_i - E_{i_k i_k})$.

Multiplying these expressions by $|\Stab(i)|$ and taking a sum over all $i$, we get that $M^\prime$ is invariant under 
$$f h^k \left( \sum_{i \in I_N^k} \sum_{\sigma \in S_k} E_{i_{\sigma(1)} i_1} E_{i_{\sigma(2)} i_2} \ldots E_{i_{\sigma(k)} i_k}-\frac{1}{N}\sum_{i \in I_N^k} |Stab(i)| \Omega_1\right).$$

Let $\{O_j\}$ be the set of orbits of $S_k$ in $I_N^k$. Each orbit can be thought as a $k$-combination with repetitions from a set of $N$ elements. 
By standard combinatorics the number of the orbits is $\binom{N+k-1}{k}$. Fix a representative $i_j$ in each orbit $O_j$.

Then we have:
$$\sum_i|\Stab(i)| = \sum_j |O_j|\cdot|\Stab(i_j)| = \sum_j |S_k| = \binom{N+k-1}{k} k!=\frac{(N+k-1)!}{(N-1)!},$$
and $M^\prime$ is invariant under 
$$f h^k \left( \sum_{i \in I_N^k} \sum_{\sigma \in S_k} E_{i_{\sigma(1)} i_1} E_{i_{\sigma(2)} i_2} \ldots E_{i_{\sigma(k)} i_k}- \frac{(N+k-1)!}{N!} \Omega_1\right).$$
Since this is true for each chart of $X$, we can apply Hilbert's Nullstellensatz to the ideal generated by $\{ h_j^k \}$ and drop $h^k$ in the above formula, 
obtaining the claim of the proposition.
\end{proof}

\begin{lemma}\label{belongscenter}
$$\widehat{\Omega}_k = \sum_{i \in I_N^k} \sum_{\sigma \in S_k}E_{i_{\sigma(1)} i_1}E_{i_{\sigma(2)} i_2}\ldots E_{i_{\sigma(k)} i_k} $$
belongs to $Z(U(\mathfrak{gl}_N))$.
\end{lemma}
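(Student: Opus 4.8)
I want to show that $\widehat{\Omega}_k = \sum_{i \in I_N^k} \sum_{\sigma \in S_k}E_{i_{\sigma(1)} i_1}\cdots E_{i_{\sigma(k)} i_k}$ is central in $U(\gl_N)$. The cleanest route is to prove invariance under the adjoint action, i.e. that $[E_{ab}, \widehat{\Omega}_k] = 0$ for all $a, b$, since $Z(U(\gl_N))$ is exactly the set of elements killed by $\mathrm{ad}(E_{ab})$. The main conceptual reorganization I would make is to recognize $\widehat{\Omega}_k$ as a symmetrized object: summing over all $i \in I_N^k$ and all $\sigma \in S_k$ is manifestly invariant under the simultaneous relabeling of indices, which is exactly the kind of symmetry that should force centrality. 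Let me sketch the index bookkeeping.

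**Key steps.** First I would compute $[E_{ab}, E_{p_1 q_1}E_{p_2 q_2}\cdots E_{p_k q_k}]$ using the Leibniz rule together with $[E_{ab}, E_{pq}] = \delta_{bp}E_{aq} - \delta_{qa}E_{pb}$. This produces a sum of $2k$ terms, where in the $m$-th product slot the factor $E_{p_m q_m}$ is replaced either by $\delta_{b p_m}E_{a q_m}$ (a ``creation'' at the left index) or by $-\delta_{q_m a}E_{p_m b}$ (at the right index). In our case $p_m = i_{\sigma(m)}$ and $q_m = i_m$. Next I would argue that the two families of terms cancel against each other after summing over $i$ and $\sigma$. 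The strategy is to exhibit an explicit bijection on the summation data: a positive term arising from a Kronecker delta $\delta_{b\, i_{\sigma(m)}}$ in one configuration $(i, \sigma)$ should match a negative term $-\delta_{i_{m'}\, a}$ in a reindexed configuration $(i', \sigma')$, where $i'$ differs from $i$ by changing one entry from $b$ to $a$ (or vice versa) and $\sigma'$ is adjusted so that the surviving product of $E$'s literally coincides. Because both the outer sum over all sequences $i$ and the inner sum over the full symmetric group $S_k$ are present, every value the free indices can take is summed symmetrically, which is what lets the index substitution be a genuine bijection rather than leaving boundary terms.

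**Main obstacle.** The hard part will be making the cancellation bijection precise, because the cyclic-word structure of $\widehat{\Omega}_k$ is subtle: when $E_{ab}$ hits the left index $i_{\sigma(m)}$ of the $m$-th factor, the resulting expression is only a product of $\gl_N$ generators whose index pattern must be re-identified with a term coming from hitting a right index $i_{m'}$ in a different summand. Tracking how the permutation $\sigma$ transforms under the substitution $i \mapsto i'$ — and verifying that it remains a bijection of $S_k$ for each fixed reindexed sequence — is where the care lies. An alternative I would keep in reserve, in case the direct bijection is too delicate to present cleanly, is to reinterpret $\widehat{\Omega}_k$ representation-theoretically: the double sum over $i$ and $S_k$ is, up to normalization, the image of a symmetrizer, and one can compute $\widehat{\Omega}_k$ as a trace of the form $\sum_\sigma \mathrm{tr}_{V^{\otimes k}}\bigl(P_\sigma \, \Phi^{\otimes k}\bigr)$ where $\Phi = \sum_{p,q} E_{pq}\otimes e_{qp}$ is the canonical $\gl_N$-valued element on the natural module $V$ and $P_\sigma$ permutes tensor factors. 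Since $\Phi^{\otimes k}$ commutes with the diagonal $\gl_N$-action and the permutation operators $P_\sigma$ do as well, this trace is automatically a $\gl_N$-invariant, hence central; this would dispatch the claim conceptually and sidestep the index combinatorics entirely.
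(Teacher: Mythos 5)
Your primary route is the same one the paper takes: expand $[E_{ab},\widehat{\Omega}_k]$ by the Leibniz rule using $[E_{ab},E_{pq}]=\delta_{bp}E_{aq}-\delta_{qa}E_{pb}$, then cancel positive against negative terms by a bijection on the summation data. But you stop exactly where the paper's proof does its actual work: constructing that bijection. It is simpler than you anticipate, and in particular the permutation needs no adjustment at all. A negative term is indexed by a triple $(\sigma,l,i)$ with $i_l=a$; keep $\sigma$ fixed, replace the $l$-th entry of $i$ by $b$ to get a new sequence $\widetilde i$, and move the distinguished slot from $l$ to $\widetilde l=\sigma^{-1}(l)$. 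The resulting positive term (which does occur in the sum, since $\widetilde i_{\sigma(\widetilde l)}=\widetilde i_l=b$) coincides with the negative one factor by factor: both products have $E_{i_{\sigma(l)}b}$ in slot $l$, $E_{a i_{\widetilde l}}$ in slot $\widetilde l$, and identical remaining factors. So as written, route one has a genuine gap --- the bijection is asserted rather than exhibited, and your concern about ``tracking how $\sigma$ transforms'' is a red herring: it does not transform.

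Your fallback argument, however, is correct and is a genuinely different proof. Writing $\Phi_{(m)}\in U(\gl_N)\otimes \mathrm{End}(V^{\otimes k})$ for your canonical element with matrix part placed in the $m$-th tensor slot, a direct computation gives
\begin{equation*}
\mathrm{tr}_{V^{\otimes k}}\bigl( P_\sigma\,\Phi_{(1)}\cdots\Phi_{(k)}\bigr)
=\sum_{i\in I_N^k} E_{i_{\sigma^{-1}(1)} i_1}E_{i_{\sigma^{-1}(2)} i_2}\cdots E_{i_{\sigma^{-1}(k)} i_k},
\end{equation*}
and summing over $\sigma\in S_k$ (equivalently over $\sigma^{-1}$) recovers $\widehat{\Omega}_k$ exactly --- no normalization is needed. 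Each $\Phi_{(m)}$ is invariant under the diagonal $\gl_N$-action (adjoint action on the $U(\gl_N)$ factor, commutator with $\Delta(x)$ on $\mathrm{End}(V^{\otimes k})$), the operators $P_\sigma$ are invariant as well, and the partial trace $\mathrm{id}\otimes\mathrm{tr}$ intertwines this diagonal action with the adjoint action on $U(\gl_N)$; hence each trace is ad-invariant, i.e.\ central. This route buys conceptual clarity and generality (it handles any linear combination of the $P_\sigma$ at once), at the cost of setting up the formalism, whereas the paper's argument is longer in indices but completely elementary and self-contained. One caution: ``$\Phi^{\otimes k}$'' must be read as the ordered product $\Phi_{(1)}\cdots\Phi_{(k)}$, since the $U(\gl_N)$-components of the $\Phi_{(m)}$ do not commute with one another; as a literal tensor power the expression does not parse.
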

\begin{proof}
Fix $a, b \in I_N$. We need to show that $\left[ E_{ab} \, , \, \widehat{\Omega}_k \right] = 0$ in $U(\gl_N)$. We can evaluate this commutator as follows:
$$\sum_{\sigma \in S_k} \sum_{l \in I_k} \sum_{i \in I_N^k}  \delta_{b i_{\sigma(l)}} E_{i_{\sigma(1)} i_1} \ldots E_{a i_l} \ldots E_{i_{\sigma(k)} i_k}
- \delta_{a i_l} E_{i_{\sigma(1)} i_1} \ldots E_{i_{\sigma(l)} b} \ldots E_{i_{\sigma(k)} i_k}$$
$$ = \sum_{\sigma \in S_k} \sum_{l \in I_k} \sum_{\genfrac{}{}{0pt}{2}{i \in I_N^k}{i_{\sigma(l)} = b}} E_{i_{\sigma(1)} i_1} \ldots E_{a i_l} \ldots E_{i_{\sigma(k)} i_k}
 - \sum_{\sigma \in S_k} \sum_{l \in I_k} \sum_{\genfrac{}{}{0pt}{2}{i \in I_N^k}{i_l = a}} E_{i_{\sigma(1)} i_1} \ldots E_{i_{\sigma(l)} b} \ldots E_{i_{\sigma(k)} i_k} .$$
Denote the product in the first sum as $\Gamma^+ (\sigma, l, i)$ and the product in the second sum as $\Gamma^- (\sigma, l, i)$.
The terms $\Gamma^- (\sigma, l, i)$ appear only for $i$ with $i_l = a$. For each such $i = (i_1, \ldots, a, \ldots, i_k)$ set $\ti = (i_1, \ldots, b, \ldots, i_k)$ with 
$\ti_l = b$.  Set $\tl = \sigma^{-1} (l)$. Then we claim that for $i$ with $i_l = a$ we have $\Gamma^- (\sigma, l, i) = \Gamma^+ (\sigma, \tl, \ti)$.
Note that $\Gamma^+ (\sigma, \tl, \ti)$ will appear in the summation since $\ti_{\sigma(\tl)} = \ti_l = b$. Indeed both products $\Gamma^+ (\sigma, \tl, \ti)$ and
$\Gamma^- (\sigma, l, i)$ will have $E_{i_{\sigma(l)} b}$ as $l$-th factor and $E_{a i_{\tl}}$ as $\tl$-th factor, and all other factors are the same as well.

Since we have a bijective correspondence between the terms $\Gamma^+$ and $\Gamma^-$, all terms will cancel, showing that the Lie bracket is indeed zero.
\end{proof}

\begin{corollary}
There exist at most $N+1$ simple finite-dimensional $\mathfrak{gl}_N$-modules $U$, for which simple gauge $\AV$-modules become reducible when viewed as modules over $\V$. 
\end{corollary}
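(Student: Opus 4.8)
The plan is to use the three preceding results to force the central character of $U$ into a one-parameter family, and then recall that a simple finite-dimensional $\gl_N$-module is determined by its central character. Suppose $M$ is reducible over $\V$ and fix a proper nonzero $\V$-submodule $M^\prime$. Since $\widehat{\Omega}_1 = \Omega_1$, Lemma~\ref{lem:omega1} tells us that the scalar $\omega_1$ by which $\Omega_1$ acts on $U$ lies in $\{0,1,\ldots,N\}$, which is the source of the $N+1$ possibilities; the remaining task is to show that $\omega_1$ already determines $U$.

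First I would feed Lemma~\ref{belongscenter} into Proposition~\ref{submoduleinvariant}. Because $\widehat{\Omega}_k$ and $\Omega_1$ are central and $U$ is simple, Schur's lemma makes them act by scalars $\widehat{\omega}_k$ and $\omega_1$; hence the operator of Proposition~\ref{submoduleinvariant} acts on $M$ as multiplication by the function $\big(\widehat{\omega}_k - \tfrac{(N+k-1)!}{N!}\,\omega_1\big) f$. If this scalar were nonzero for some $k \geq 2$, then $M^\prime$ would be stable under multiplication by every $f \in \A$ and thus an $\AV$-submodule, contradicting the $\AV$-simplicity of $M$ from Theorem~\ref{thm:bifuni}. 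Therefore $\widehat{\omega}_k = \tfrac{(N+k-1)!}{N!}\,\omega_1$ for all $k \geq 2$, so all of $\widehat{\omega}_1,\ldots,\widehat{\omega}_N$ are determined by $\omega_1$.

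The crux is to show that the numbers $\widehat{\omega}_1,\ldots,\widehat{\omega}_N$ pin down the isomorphism class of $U$, and for this I would compute the leading symbols of the $\widehat{\Omega}_k$ in $\operatorname{gr} U(\gl_N) = S(\gl_N)$. Replacing each $E_{ij}$ by a commuting variable $x_{ij}$ and contracting the sum over $i \in I_N^k$ along the cycles of $\sigma$, the symbol of $\widehat{\Omega}_k$ becomes
\[ \sum_{\sigma \in S_k}\ \prod_{\text{cycles } C \text{ of } \sigma} \operatorname{tr}(X^{|C|}) = k!\, h_k, \]
where $X=(x_{ij})$ and $h_k$ is the complete homogeneous symmetric polynomial in the eigenvalues of $X$ (the grouping of permutations by cycle type turns $\sum_\sigma p_{\mathrm{type}(\sigma)}$ into $k!\sum_{\lambda\vdash k} z_\lambda^{-1}p_\lambda$). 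Since $h_1,\ldots,h_N$ are algebraically independent generators of the ring of symmetric polynomials, a standard filtered-graded lifting shows that $\widehat{\Omega}_1,\ldots,\widehat{\Omega}_N$ freely generate $Z(U(\gl_N))$ under the Harish--Chandra identification of the center with symmetric functions. Consequently the full central character of $U$ is a function of $(\widehat{\omega}_1,\ldots,\widehat{\omega}_N)$, hence of $\omega_1$ alone.

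To finish, I would invoke the fact that distinct dominant highest weights yield distinct central characters, so the central character determines $U$ up to isomorphism; each of the $N+1$ admissible values of $\omega_1$ then gives at most one module $U$, proving the bound. I expect the genuine work to sit entirely in the third paragraph: verifying that the symbol of $\widehat{\Omega}_k$ is exactly $k!\,h_k$ and hence that these elements generate the center. Everything else is a bookkeeping assembly of the quoted lemmas together with Schur's lemma and Harish--Chandra.
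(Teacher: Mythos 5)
Your proposal is correct, and its overall skeleton coincides with the paper's: combine Proposition~\ref{submoduleinvariant} with Lemma~\ref{belongscenter} and Theorem~\ref{thm:bifuni} to force $\widehat{\Omega}_k$ to act by the scalar $\tfrac{(N+k-1)!}{N!}\,\omega_1$ for all $k\geq 2$, then use Lemma~\ref{lem:omega1} and the Harish--Chandra theorem (one simple finite-dimensional module per central character) to get the bound $N+1$. Where you genuinely diverge is in the crux, namely why these constraints pin down the full central character. The paper works with the standard Gelfand invariants $\Omega_1,\ldots,\Omega_N$ (quoting $Z(U(\gl_N))=\K[\Omega_1,\ldots,\Omega_N]$), writes $\widehat{\Omega}_k - \tfrac{(N+k-1)!}{N!}\Omega_1 = c\,\Omega_k + Q_k(\Omega_1,\ldots,\Omega_{k-1})$ with $c\neq 0$, justifying $c \neq 0$ by observing that the monomial $E_{12}E_{23}\cdots E_{k1}$ occurs in $\widehat{\Omega}_k$ but cannot arise from products of lower-order Casimirs; the vanishing of these polynomials then determines $\Omega_2,\ldots,\Omega_N$ recursively from $\Omega_1$. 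You instead compute the principal symbol of $\widehat{\Omega}_k$ in $S(\gl_N)$, identify it (via the cycle decomposition and $h_k=\sum_{\lambda\vdash k}z_\lambda^{-1}p_\lambda$) with $k!\,h_k$, and conclude by a filtered-graded lifting that the $\widehat{\Omega}_k$ themselves freely generate the center, so their eigenvalues determine the central character directly. Your route is somewhat longer but more self-contained and arguably more rigorous: the paper's ``such a term cannot come from lower Casimirs'' is exactly the kind of claim that a PBW/symbol argument like yours is needed to make airtight (your computation shows the relevant coefficient is $(k-1)!$, since the symbol of $\Omega_k$ is $p_k$ and $k$-cycles contribute $(k-1)!\,p_k$ to $k!\,h_k$). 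The paper's route is shorter and stays within the standard generators, at the cost of leaning on that informal term-counting step and on the quoted description of the center.
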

\begin{proof}
Lemma \ref{belongscenter} guarantees that $\widehat{\Omega}_k$ can be written as a polynomial in Casimirs $\Omega_i$, we set
$$P_k(\Omega_1,\ldots,\Omega_k) = \widehat{\Omega}_k -\frac{\left(N+k-1\right)!}{N!}\Omega_1.$$

Note that for $2 \leq k \leq N$, the Casimir $\Omega_k$ will occur in $P_k(\Omega_1,\ldots,\Omega_k)$ with a non-zero coefficient, so $P_k$ may be written as
$P_k(\Omega_1,\ldots,\Omega_k) = c \, \Omega_k + Q_k (\Omega_1,\ldots,\Omega_{k-1})$.
Indeed, the expression for $\widehat{\Omega}_k$ contains the term $E_{12} E_{23} \ldots E_{k1}$, while such a term can not come from the products of the Casimirs 
of lower orders.

From Lemma \ref{submoduleinvariant} we know that $M^\prime$
is invariant under the action of $f P_k(\Omega_1,\ldots,\Omega_k)$. 
If any of these polynomials acts as non-zero scalar on $U$ then we can reconstruct the $\A$-action and conclude that $M^\prime$ is an $\AV$-submodule in $M$,
which will contradict simplicity of $M$ as an $\AV$-module.

 If all polynomials $P_k(\Omega_1,\ldots,\Omega_k)$ for $2 \leq k \leq N$ act on $U$ as zero, each one will be fixing the value of one Casimir in terms of 
the values of the lowers Casimirs, so if we fix the action of $\Omega_1$ and all the polynomials are acting by zero we will be fixing the central character of $U$. 
By the Harish-Chandra Isomorphism we have at most one finite dimensional simple $\mathfrak{gl}_N$-module for each central character, see e.g.~\cite[Chapter~1]{Hu}.
Thus we have at most $N+1$ simple $\mathfrak{gl}_N$-modules that give rise to reducible $\mathcal{V}$-modules, 
since by Lemma \ref{lem:omega1} we have $N+1$ possible values for the action of $\Omega_1$. 
\end{proof}

\begin{remark}
Note that the polynomials $P_k(\Omega_1,\ldots,\Omega_k)$ are determined by the $\mathfrak{gl}_N$-module $U$ and do not depend on neither the gauge fields $\{ B_i \}$,
nor on the variety $X$.
\end{remark}

\hspace{0cm}

\textbf{Example.}
 $P_2(\Omega_1,\Omega_2)$ is given by $\Omega_2+\Omega_1^2-(N+1)\Omega_1$. 
 For $N = 2$, if we take $\Omega_1 \in \{0,1,2\}$ and obtain $\Omega_2$ by solving $P_2(\Omega_1,\Omega_2)=0$, 
 we obtain the following three central characters:
\begin{center}
\begin{tabular}{ | c | c | c | c | }
\hline
$\Omega_1$ & 0 & 1 & 2 \\ \hline
$\Omega_2$ & 0 & 2 & 2 \\ \hline
\end{tabular}
\end{center}
which are the central characters of the modules of exterior powers of the natural module $\K^2$.

\section{De Rham Complex}
Let $V$ be the natural $\mathfrak{gl}_N$-module. It has a basis $\{e_1, \ldots ,e_N\}$ on which $\mathfrak{gl}_N$ acts by $E_{ij}\cdot e_k=\delta_{jk}e_i$. 
This action extends naturally to $\Lambda^k V$ in the standard way: for $x\in \mathfrak{gl}_n$ we have
\[x\cdot (e_{i_1}\wedge \cdots \wedge e_{i_k}) = \sum_{j=1}^k e_{i_1}\wedge \cdots  \wedge (x\cdot e_{i_j})\wedge \cdots \wedge e_{i_k}.\]
We also extend this definition to the trivial case and define $\Lambda^{0} V=\K$ to be $1$-dimensional with all of $\mathfrak{gl}_N$ acting as zero. 
We note that each $\Lambda^k V$ is a simple $\mathfrak{gl}_N$ module on which the identity matrix acts by the scalar $k$.

Let $B=(B_1, \ldots, B_N)$ with $B_i \in \A_{(h)}$ such that $\frac{\partial}{\partial x_i}(B_j)=\frac{\partial}{\partial x_j}(B_i)$. 
For example we may pick $B=\nabla G$ for a fixed function $G \in \A$.
Then for $1\leq k \leq N$, we have an $\A\mathcal{V}$-module structure on $\A_{(h)}\otimes \Lambda^k V$ where 
a vector field $\sum f_i \frac{\partial}{\partial x_i}$ as embedded in $\mathrm{Der}(\A_{(h)})$ acts via
\[(\sum_{i=1}^N f_i \frac{\partial}{\partial x_i}) \cdot g\otimes v = \sum_{i=1}^N (f_i \frac{\partial g}{\partial x_i}+B_if_ig)\otimes v + 
\sum_{i,p=1}^N \frac{\partial f_i}{\partial x_p} g \otimes E_{pi}v.\] 

Now we define maps 
\[d_k: \A_{(h)}\otimes \Lambda^k V \rightarrow \A_{(h)}\otimes \Lambda^{k+1} V \quad \text{ by } 
\quad d_k(g\otimes v) = \sum_{p=1}^N (\frac{\partial g}{\partial x_p} + B_pg)\otimes e_p\wedge v.\]

\begin{proposition}\label{cx}
The maps $d_k$ are morphisms of Lie algebra modules which satisfy $d_{k+1} \circ d_{k}=0$. In other words,
\[ \A_{(h)} \otimes \Lambda^{0} V \xrightarrow{\:\; d_0 \:\;} \A_{(h)} \otimes \Lambda^{1} V  \xrightarrow{\:\; d_1 \:\;} \cdots \xrightarrow{d_{N-1}} \A_{(h)} \otimes \Lambda^{N} V \] is a chain complex in the category $\mathcal{V}$-Mod. 
\end{proposition}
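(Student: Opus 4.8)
**The plan is to verify the two claims separately: first that $d_{k+1} \circ d_k = 0$, and second that each $d_k$ commutes with the action of $\V$.** The composition claim is the easier of the two and I would dispatch it first. Applying the definition twice, we get
\[
d_{k+1}(d_k(g \otimes v)) = \sum_{q,p=1}^N \left(\dd{x_q} + B_q\right)\left(\dd{x_p} + B_p\right)(g) \otimes e_q \wedge e_p \wedge v.
\]
Here I am writing $\left(\dd{x_p}+B_p\right)(g)$ for the scalar function $\frac{\partial g}{\partial x_p} + B_p g$. The key observation is that the wedge $e_q \wedge e_p$ is antisymmetric in $(q,p)$, so only the antisymmetric part of the operator coefficient survives. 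That antisymmetric part is the commutator
\[
\left[\dd{x_q} + B_q,\; \dd{x_p} + B_p\right] = \frac{\partial B_p}{\partial x_q} - \frac{\partial B_q}{\partial x_p},
\]
which vanishes by the closedness hypothesis $\frac{\partial}{\partial x_i}(B_j) = \frac{\partial}{\partial x_j}(B_i)$. Hence $d_{k+1} \circ d_k = 0$.

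**The main work is showing that $d_k$ is a $\V$-module homomorphism,** i.e. that $d_k(\eta \cdot (g \otimes v)) = \eta \cdot d_k(g \otimes v)$ for every $\eta = \sum_i f_i \dd{x_i} \in \V$. I would compute both sides explicitly using the action formula from the statement and compare. Writing $D_p = \dd{x_p} + B_p$ as shorthand so that $d_k(g \otimes v) = \sum_p D_p(g) \otimes e_p \wedge v$, the left-hand side expands into three groups of terms coming from the three summands in the $\V$-action on $\Ah \otimes \Lambda^k V$: a \emph{transport} term where $\eta$ differentiates $g$, a \emph{gauge} term involving $B_i f_i g$, and a \emph{matrix} term involving $\frac{\partial f_i}{\partial x_p}$ and the $\gl_N$-action $E_{pi}$. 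The right-hand side expands analogously after applying $d_k$ first. I expect the transport and gauge terms to match after combining $\dd{x_i}$ and $B_i$ into the operator $D_i$ and using the fact that the $D_i$ commute (again by closedness of $B$), which lets me freely reorder the differential operators acting on $g$.

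**The crux — and the step I expect to be the main obstacle — is reconciling the matrix terms,** because the $\gl_N$-action on $\Lambda^{k+1} V$ appearing on the right differs formally from the one on $\Lambda^k V$ appearing inside $d_k$ on the left. Concretely, on the right-hand side the term $\sum_{i,p,q} \frac{\partial f_i}{\partial x_p}\, D_q(g) \otimes E_{pi}(e_q \wedge v)$ must be matched against the contribution produced when $\eta$ differentiates the extra factor $e_p \wedge$ created by $d_k$, plus the image under $d_k$ of the matrix term of $\eta \cdot (g \otimes v)$. The identity that makes this work is the Leibniz rule for the $\gl_N$-action on the exterior power,
\[
E_{pi}(e_q \wedge v) = \delta_{iq}\, e_p \wedge v + e_q \wedge E_{pi}(v),
\]
which exactly accounts for the difference between acting on $\Lambda^{k+1}V$ and acting on $\Lambda^k V$ and then wedging. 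The $\delta_{iq}$ term supplies precisely the contribution in which $\dd{x_q}$ lands on the newly introduced basis vector $e_p$, while the second term reproduces $d_k$ applied to the matrix term. I would carefully track the free index $q$ (the one introduced by $d_k$) against the summation index $i$ (the one from the vector field), confirm the Kronecker delta collapses the double sum correctly, and check that no cross terms between the gauge part and the matrix part are left over. Once these bookkeeping identities are assembled, both sides coincide termwise and the proof is complete.
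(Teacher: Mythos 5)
The paper states Proposition~\ref{cx} without proof, so a direct verification like yours is the natural route, and the first half of your argument is fine: writing $D_p = \dd{x_p} + B_p$, the composition gives coefficients $D_qD_p(g)$ paired with the antisymmetric vectors $e_q\wedge e_p\wedge v$, and the symmetric part survives only through $[D_q,D_p] = \frac{\partial B_p}{\partial x_q} - \frac{\partial B_q}{\partial x_p} = 0$, so $d_{k+1}\circ d_k = 0$.

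Your accounting of the homomorphism property, however, has a genuine gap. When you apply $d_k$ to the matrix term $\sum_{i,p}\frac{\partial f_i}{\partial x_p}\,g\otimes E_{pi}v$ of $\eta\cdot(g\otimes v)$, the operator $D_q$ does not only act on $g$: its derivative part also hits the coefficient $\frac{\partial f_i}{\partial x_p}$, producing
\[
\sum_{i,p,q=1}^N \frac{\partial^2 f_i}{\partial x_q\,\partial x_p}\, g\otimes e_q\wedge E_{pi}v ,
\]
a term containing second derivatives of the $f_i$, which has no counterpart whatsoever on the side $\eta\cdot d_k(g\otimes v)$ (there only first derivatives of the $f_i$ occur). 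Neither of the two identities you invoke disposes of it: commutativity of the $D_i$ is irrelevant here, and the Leibniz rule $E_{pi}(e_q\wedge v) = \delta_{iq}e_p\wedge v + e_q\wedge E_{pi}(v)$ only matches the remaining pieces. In particular your claim that ``the second term reproduces $d_k$ applied to the matrix term'' is inaccurate --- it reproduces only the part of that image in which $D_q$ falls on $g$. The leftover term does vanish, but for a reason you never state: $e_q\wedge E_{pi}v$ is \emph{antisymmetric} under $p\leftrightarrow q$ (one way to see this is that $E_{pi}$ acts on the exterior algebra as $e_p\wedge \iota_i$, with $\iota_i$ contraction against the $i$-th dual basis vector, so $e_q\wedge E_{pi}v = e_q\wedge e_p\wedge \iota_i v$), while $\frac{\partial^2 f_i}{\partial x_q\partial x_p}$ is symmetric in $(p,q)$, so the double sum over $p,q$ is zero. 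The rest of your matching (the transport terms via $[D_i,D_q]=0$, the $\delta_{iq}$ piece against the term where $D_q$ differentiates $f_i$, and the first-derivative matrix terms against each other) is correct, so adding this one cancellation completes the proof; without it the verification does not close.
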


Note however that the maps $d_k$ are not $\A$-module morphisms. We can now state the main theorem of our paper.
\begin{theorem}
\label{main}
If $U$ is a simple finite-dimensional $\mathfrak{gl}_N$-module that is not an exterior power of the natural module, then any gauge module $M \subset \A_{(h)}\otimes U$ is simple as a $\V$-module.
\end{theorem}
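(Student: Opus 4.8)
The plan is to prove the contrapositive: if the gauge module $M$ is reducible as a $\V$-module, then $U$ must be an exterior power of the natural module. So suppose $M'$ is a proper nonzero $\V$-submodule of $M$. By Lemma~\ref{lem:omega1}, $\Omega_1$ acts on $U$ by a scalar $\lambda \in \{0,1,\ldots,N\}$. By Proposition~\ref{submoduleinvariant} together with Lemma~\ref{belongscenter}, for every $f \in \A$ and every $k \geq 2$ the submodule $M'$ is invariant under $f P_k(\Omega_1,\ldots,\Omega_k)$, where $P_k = \widehat{\Omega}_k - \frac{(N+k-1)!}{N!}\Omega_1$ is central and hence acts on the simple module $U$ by a scalar $\gamma_k$.

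First I would dispose of the case where some $\gamma_k \neq 0$. Since $P_k$ lies in $Z(U(\gl_N))$, it acts on $\Ah \otimes U \supset M$ as multiplication by $\gamma_k$ on the $U$-factor, so $f P_k \cdot m = \gamma_k\, f m$ for $m \in M'$. Invariance of $M'$ then gives $f m \in M'$ for all $f \in \A$, making $M'$ an $\AV$-submodule and contradicting the $\AV$-simplicity of $M$ (Theorem~\ref{thm:bifuni}). Therefore $\gamma_k = 0$ for all $2 \leq k \leq N$; equivalently $\widehat{\Omega}_k = \frac{(N+k-1)!}{N!}\lambda$ on $U$. Together with the value $\lambda$ of $\Omega_1$, these equations fix the central character of $U$, and by the Harish-Chandra isomorphism there is at most one simple finite-dimensional $\gl_N$-module with a given central character. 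As in the corollary following Lemma~\ref{belongscenter}, this leaves at most $N+1$ candidate modules, one for each admissible value of $\lambda$.

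It remains to match these candidates with the exterior powers. On $\Lambda^m V$ the identity matrix acts by $m$, so $\Omega_1 = m$; hence the $N+1$ modules $\Lambda^0 V,\ldots,\Lambda^N V$ are pairwise inequivalent and realize all $N+1$ admissible values of $\lambda$. By Proposition~\ref{cx}, each space $\Ah \otimes \Lambda^m V$ carries the de Rham differentials $d_{m-1}$ and $d_m$, which are nonzero morphisms of $\V$-modules; restricting the complex to the finite-rank gauge module should produce a proper nonzero $\V$-submodule (the image of $d_{m-1}$ for $m>0$, or the kernel of $d_m$ for $m<N$). Thus every exterior power is exceptional. Since there are exactly $N+1$ exterior powers and at most $N+1$ exceptional modules, the exceptional modules are precisely the exterior powers. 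Consequently the module $U$ above, being exceptional, is isomorphic to $\Lambda^\lambda V$, contradicting the hypothesis that $U$ is not an exterior power. Hence $M$ is simple as a $\V$-module.

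The main obstacle I anticipate is the last step: verifying that the $\V$-morphisms $d_k$ of Proposition~\ref{cx}, which a priori live on the full localized spaces $\Ah \otimes \Lambda^k V$, restrict to the finite-rank gauge module $M$ so as to yield a submodule that is genuinely proper and nonzero. This requires care at the endpoints $m=0$ and $m=N$, where one of $\ker d_m$ and $\operatorname{im} d_{m-1}$ degenerates, and one must confirm that the surviving differential is neither identically zero nor surjective on $M$, so that reducibility holds for all $N+1$ exterior powers and the counting argument is tight.
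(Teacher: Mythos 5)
Your proposal reproduces the paper's strategy---the counting of exceptional modules via Lemma~\ref{lem:omega1}, Proposition~\ref{submoduleinvariant}, Lemma~\ref{belongscenter} and the Harish--Chandra isomorphism, followed by identifying the $N+1$ candidates with the exterior powers through the de Rham complex---but the step you flag as an ``anticipated obstacle'' is a genuine gap, and it is exactly where the paper has to do real work: the top exterior power $U=\Lambda^N V$. Your candidate submodule there is $\mathrm{Im}\;d_{N-1}$, but for the natural choice of data ($X=\mathbb{A}^N$, $h=1$, all $B_i=0$) this image is the \emph{whole} module: in characteristic $0$ every polynomial $g$ equals $\partial h/\partial x_1$ for some polynomial $h$, so $d_{N-1}(h\otimes e_2\wedge\cdots\wedge e_N)=g\otimes e_1\wedge\cdots\wedge e_N$ and $d_{N-1}$ is surjective (algebraically, $H^N_{dR}(\mathbb{A}^N)=0$); the paper explicitly notes that with zero gauge fields $\A\otimes\Lambda^N V$ is a \emph{simple} $\V$-module. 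Consequently your argument establishes only that $\Lambda^0 V,\ldots,\Lambda^{N-1}V$ are exceptional. The counting is then not tight: nothing excludes a simple module $U$ that is not an exterior power, has $\Omega_1=N$, and kills all the $P_k$---precisely the situation your final contradiction needs to rule out.

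The paper closes this gap by choosing nonzero gauge fields $B_i=-2x_i$ (a twist of the complex by $e^{G}$ with $G=-x_1^2-\cdots-x_N^2$) and proving $1\otimes e_1\wedge\cdots\wedge e_N\notin \mathrm{Im}\;d_{N-1}$ analytically: Stokes' theorem gives $\int_{\R^N} d\omega=0$ for $\omega\in e^G\Omega^{N-1}$, whereas $\int_{\R^N}e^{-x_1^2-\cdots-x_N^2}\,dx_1\wedge\cdots\wedge dx_N=\pi^{N/2}\neq 0$. An alternative repair that avoids constructing any reducible example for $k=N$: on the determinant module $\Lambda^N V$ each $E_{ij}$ acts by $\delta_{ij}$, so $\widehat{\Omega}_k$ acts by $\sum_{i\in I_N^k}|\Stab(i)|=(N+k-1)!/(N-1)!$, which equals $\frac{(N+k-1)!}{N!}\,\Omega_1$ since $\Omega_1=N$; hence all $P_k$ vanish on $\Lambda^N V$, and Harish--Chandra uniqueness already forces any exceptional module with $\Omega_1=N$ to be $\Lambda^N V$. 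Your cases $0\le m<N$ do go through, but they need the one-line verification the paper supplies: $d_m(1\otimes e_1\wedge\cdots\wedge e_m)=0$ while $d_m(x_1\otimes e_2\wedge\cdots\wedge e_{m+1})\neq 0$, so $\mathrm{Ker}\;d_m$ is nonzero and proper.
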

\begin{proof}
By Lemma~\ref{lem:omega1} a gauge module $M\subset \A_{(h)}\otimes U$ is simple in $\V$-Mod if $\Omega_1=\sum_{i=1}^N E_{ii}$ does not act on $U$ as a scalar belonging from the set $\{0, \ldots, N\}$. Moreover, by the discussion after Lemma~\ref{belongscenter}, there exists at most $N+1$ iso-classes of such exceptional $\mathfrak{gl}_N$-modules which may correspond to $\V$-reducible gauge modules.

It therefore only remains to show that there exist $\V$-reducible gauge modules precisely when $U=\Lambda^k V$ for $0\leq k \leq N$. 
For this we look at the easy example of affine space: take $X=\mathbb{A}^N$, and $h=1$. Then the chart $N(h)$ covers $X$, the standard variables are chart parameters, and $\A_{(h)}=\A=\K[x_1,\ldots,x_N]$. Picking all $B_i=0$, we have a gauge-module structures on each $\A\otimes \Lambda^k V$ for each $0 \leq k \leq N$ where a vector field acts by
\[(\sum_{i=1}^N f_i \frac{\partial}{\partial x_i}) \cdot g\otimes v = \sum_{i=1}^N f_i \frac{\partial g}{\partial x_i}\otimes v + 
\sum_{i,p = 1}^N \frac{\partial f_i}{\partial x_p} g \otimes E_{pi}v.\] 
Since the maps $d_k$ from Proposition~\ref{cx} are $\V$-module homomorphisms, the kernel of $d_{k}$ is a submodule of $\A \otimes \Lambda^k V$ for $0\leq k < N$. Since $d_k(1\otimes e_1 \wedge \cdots \wedge e_k)=0$ but $d_k(x_1\otimes e_2\wedge \cdots \wedge e_{k+1})=e_1\wedge \cdots \wedge e_{k+1} \neq 0$, we see that $\mathrm{Ker}\;d_k$ is in fact a proper submodule, so $\A \otimes \Lambda^k V$ is a reducible $\V$-module for $0 \leq k < N$.

Now only the case $k=N$ remains and we need to find an example of a reducible gauge module-structure on $\A\otimes \Lambda^N V$. In the above example, this module is actually simple, so instead we pick $B_i=-2 x_i$. We claim that 
\[\mathrm{Im}\;d_{N-1}=\{\sum_{i=1}^N (\frac{\partial f_i}{\partial x_i}-2x_if_i)\otimes e_1\wedge \cdots \wedge e_N \;|\; f_1, \ldots, f_N \in \K[x_1,\ldots, x_N]\}\]
 is a proper submodule in $\A\otimes \Lambda^N V$. 
Indeed, this submodule is nonzero and 
does not contain the element $1\otimes e_1 \wedge \cdots \wedge e_N$. 
To see this, we interpret de Rham complex with these gauge fields as 
$$ e^G \Omega^0  \xrightarrow{\:\; d_0 \:\;} \ldots
 \xrightarrow{\:\; d_{N-2} \:\;}  e^G \Omega^{N-1}
 \xrightarrow{\:\; d_{N-1} \:\;}  e^G \Omega^{N},$$
where $G = -x_1^2 - \ldots - x_N^2$.
Let us give an analytic proof that $e^{-x_1^2 - \ldots - x_N^2} dx_1 \wedge \ldots
\wedge dx_N \not\in \mathrm{Im}\;d_{N-1}$ under assumption that $\R \subset \K$. We leave an algebraic proof for a general field $\K$ as an exercise to the reader. 
Let $\omega \in e^G \Omega^{N-1}$. Applying Stokes' theorem, we see that
$$\int_{\R^N} d \omega = 0,$$
however 
$$\int_{\R^N} e^{-x_1^2 - \ldots - x_N^2} dx_1 \wedge \ldots
\wedge dx_N = \pi^{N/2}.$$
Hence, $e^{-x_1^2 - \ldots - x_N^2} dx_1 \wedge \ldots
\wedge dx_N \not\in \mathrm{Im}\;d_{N-1}$.
This concludes the proof.
\end{proof}

\section{Gauge modules on $\Sp^1$ and irreducible modules for $\sll_2$}
In Theorem \ref{main} we proved that gauge modules corresponding to non-exceptional irreducible $\gl_N$ modules $U$, are simple $\V$-modules. In the section we are going to show that irreducibility of $U$ is not a necessary condition for simplicity of a gauge $\V$-module.

In this section we fix $X = \Sp^1$ with equation $x^2 + y^2 = 1$. Setting $t = x + \sqrt{-1}y$,
$s = x - \sqrt{-1}y$, we rewrite the equation of the circle as $t s = 1$. The Jacobian matrix is 
$\begin{pmatrix} s & t \end{pmatrix}$, and we see that the chart $s \neq 0$ covers the whole circle, which allows us to work with a single chart with the chart  parameter $t$. Since $s$ is invertible in $\A$, the localized algebra $\A_{(s)}$ coincides with $\A = \K [t, t^{-1}]$.

For $\alpha \in K$, let us consider the following gauge module for the Lie algebra of vector fields on a circle,
$W_1 = \Der \K [t, t^{-1}]$. Take $U_\alpha = \K^2$ with a basis $\{v, u\}$ and the identity matrix in $\gl_1$
acting on $U_\alpha$ as multiplication by $\alpha$. Obviously $U_\alpha$ splits as a direct sum of two isomorphic 1-dimensional $\gl_1$-modules.

Since our variety is 1-dimensional and $\gl_1$ acts on $U_\alpha$ by scalar matrices, any $2\times 2$ matrix
$B$ with entries in $\A$ will define a gauge field on $N(\alpha) = \A \otimes U_\alpha$. For our example we
set 
$$B = \begin{pmatrix} 0 & t \\ 1 & 0 \\ \end{pmatrix}.$$ 
Setting a basis $v_k = t^k \otimes v$, $u_k = t^k \otimes u$, the action of $W_1$ on $N(\alpha)$
can be written as
\begin{align*}
&e_n v_k = (k + \alpha n) v_{n+k} + u_{n+k}, \\
&e_n u_k = (k + \alpha n) u_{n+k} + v_{n+k+1},
\end{align*}
where $e_n = t^{n+1} \frac{\partial}{\partial t}$, $n, k \in \Z$.

The span of $\{e_1, e_0, e_{-1}\}$ forms a subalgebra in $W_1$, which is isomorphic to 
$\sll_2$. We are going to show that $N(0)$ is a simple $W_1$-module by studying its structure as a module over $\sll_2$.

It is easy to check that the Casimir element $C = e_0^2 + e_0 - e_{-1} e_1$ acts on $N(\alpha)$ as
multiplication by $\gamma = \alpha (\alpha -1)$. We can view $N(\alpha)$ as a module over the quotient algebra $U(\gamma) = U(\sll_2)/\left< C - \gamma \right>$.

R.~Block \cite{Bl} classified irreducible $\sll_2$-modules by describing maximal left ideals in the algebras $U(\gamma)$.  Simple modules are then presented as quotients by these left ideals. For general non-weight simple modules explicit realizations in terms of action on a basis are not known. 
A slightly different approach to Block's classification is given by Bavula in \cite{Ba}. We will rely on the results of
\cite{Ba} in order to understand the structure of $N(\alpha)$ as a module over $\sll_2$. 

\begin{theorem}
(a) (i) The vector $v_0 \in N(0)$ is annihilated by $s = e_{-1} e_0 - 1 \in U(0)$.

\noindent
(ii) An $\sll_2$-submodule $M \subset N(0)$ generated by $v_0$ is a simple $\sll_2$-module and
$$ M \cong U(0) / U(0) s .$$

\noindent
(iii)  The set $\left\{ v_0, \, e_0^n v_0, \, e_{-1}^n v_0 \, | \, n \geq 1 \right\}$ is a basis of $M$.

\noindent
(iv) The quotient $N(0)/M$ is a simple highest weight $\sll_2$-module with the highest weight $-1$.

\noindent
(v) $N(0)$ is irreducible as a $W_1$-module.

\

(b) Let $\alpha \not\in \frac{1}{2} \Z$, $\gamma = \alpha(\alpha -1)$.

\noindent
(i) The vector $v_0 \in N(\alpha)$ is annihilated by $p = e_1 - e_0^2 (e_0 + 1 - \alpha)$
and $q = e_{-1} e_0^2 - (e_0 + 1 - \alpha)$.

\noindent
(ii) $N(\alpha)$ is a simple $\sll_2$-module and 
$$ N(\alpha) \cong U(\gamma) / \left( U(\gamma) p + U(\gamma)q \right).$$ 

\noindent
(iii) $N(\alpha)$ is irreducible as $W_1$-module.
\end{theorem}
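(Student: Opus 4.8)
The plan is to analyze $N(\alpha)$ through the copy of $\sll_2 = \Span\{e_1,e_0,e_{-1}\}$ sitting inside $W_1$, with $e_1$ raising, $e_{-1}$ lowering, and $e_0$ playing the role of the Cartan element. Since the Casimir $C=e_0^2+e_0-e_{-1}e_1$ acts on $N(\alpha)$ by the scalar $\gamma=\alpha(\alpha-1)$, each $N(\alpha)$ is a module over $U(\gamma)=U(\sll_2)/\langle C-\gamma\rangle$, and I will identify it inside Bavula's description \cite{Ba} of simple $U(\gamma)$-modules (cf.\ \cite{Bl}). The annihilation statements (a)(i) and (b)(i) are direct verifications from the explicit action: one computes $e_0 v_0=u_0$ and $e_{-1}u_0=v_0$ to get $sv_0=(e_{-1}e_0-1)v_0=0$ when $\alpha=0$, and similarly evaluates $p$ and $q$ on $v_0$ using $e_0 v_0=u_0$, $e_0^2v_0=v_1$, $e_1v_0=\alpha v_1+u_1$ and $e_{-1}v_1=(1-\alpha)v_0+u_0$ to obtain $pv_0=qv_0=0$ when $\alpha\notin\tfrac12\Z$.

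For part (a), the relation $sv_0=0$ yields a surjection $U(0)/U(0)s\twoheadrightarrow M:=U(0)v_0$. To prove (ii) and (iii) I would first reduce an arbitrary word of $U(0)$ applied to $v_0$ to the proposed spanning set $\{v_0,\,e_0^nv_0,\,e_{-1}^nv_0\mid n\geq 1\}$, using the PBW theorem together with the two relations $Cv_0=0$ and $e_{-1}e_0 v_0=v_0$; linear independence then follows by inspecting leading terms in the basis $\{v_k,u_k\}$ (the vectors $e_0^nv_0$ are supported in nonnegative indices, the $e_{-1}^nv_0$ in negative indices). Maximality of the left ideal $U(0)s$, hence simplicity of $M$ and the isomorphism $M\cong U(0)/U(0)s$, would be read off from Bavula's classification. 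For (iv) I would exhibit $\bar v_{-1}$ as a highest weight vector of $N(0)/M$: since $e_1 v_{-1}=-v_0+u_0\in M$ and $e_0 v_{-1}=-v_{-1}+u_{-1}$ with $u_{-1}=e_{-1}v_0\in M$, the class $\bar v_{-1}$ satisfies $e_1\bar v_{-1}=0$ and $e_0\bar v_{-1}=-\bar v_{-1}$, so $N(0)/M$ is the simple highest weight module of highest weight $-1$ (the irreducible Verma module, as $-1\notin\Z_{\geq 0}$).

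Part (a)(v) is then deduced from the $\sll_2$-picture. The two steps $M$ and $N(0)/M\cong L(-1)$ are simple and non-isomorphic (one is a non-weight module, the other a highest weight module), so it suffices to know that the extension $0\to M\to N(0)\to L(-1)\to 0$ is non-split; I would prove this by showing $N(0)$ contains no genuine $e_0$-eigenvector of eigenvalue $-1$ (the triangular, non-diagonalizable action of $e_0$ on $\{v_k,u_k\}$ forbids it), so that $M$ is the unique proper nonzero $\sll_2$-submodule. A nonzero $W_1$-submodule is $\sll_2$-stable, hence equals $0$, $M$, or $N(0)$; but $M$ is not $W_1$-stable, since $e_{-2}e_0^2 v_0=e_{-2}v_1=v_{-1}+u_{-1}\notin M$, so every nonzero $W_1$-submodule is all of $N(0)$. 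Part (b) is cleaner: for $\alpha\notin\tfrac12\Z$ Bavula's classification shows the left ideal $U(\gamma)p+U(\gamma)q$ is maximal, whence $N(\alpha)\cong U(\gamma)/(U(\gamma)p+U(\gamma)q)$ is already simple over $\sll_2$; being simple over the subalgebra $\sll_2\subset W_1$, it is a fortiori simple over $W_1$, giving (b)(ii) and (b)(iii) at once.

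The main obstacle is the non-weight part of part (a): correctly placing $M=U(0)/U(0)s$ within Bavula's and Block's classification and verifying that the relevant left ideals are maximal, since for these non-weight simple modules there is no elementary highest weight theory to fall back on. A secondary but essential subtlety is the non-splitness in (a)(v): one must rule out a $W_1$-stable $\sll_2$-complement to $M$, which is exactly where the full Witt action (through the fields $e_{\pm 2},\dots$, absent from $\sll_2$) must be used to glue the two constituents into an irreducible module.
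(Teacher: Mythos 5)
Your part (a) is correct and, except for step (v), follows essentially the paper's own route: (i) is the same direct check; for (ii)--(iii) the paper likewise invokes Bavula (its Corollary 3.9(b)) for simplicity of $U(0)/U(0)s$, deduces injectivity of the induced map $U(0)/U(0)s \to N(0)$, and obtains the basis by exactly the kind of leading/lowest-term analysis you propose, in the ordered basis $\ldots < u_{-1} < v_0 < u_0 < v_1 < \ldots$ (the paper's spanning argument is not a PBW reduction but the observation that $v_0, e_0 v_0, e_0^2 v_0, \ldots$ have leading terms $v_0, u_0, v_1, u_1, \ldots$, hence span $N_+ = \Span\{v_k, u_k \mid k \geq 0\}$, and that $M = N_+ + \K[e_{-1}] v_0$; your PBW reduction would end up reproving this); (iv) is the same computation. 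In (v) you genuinely diverge: the paper shows directly that every nonzero $\sll_2$-submodule contains $M$, because $e_1$ acts injectively and a high enough power of $e_1$ pushes any nonzero vector into $N_+ \subset M$; you instead run the two-step composition series and rule out splitting by noting that $e_0$ has no eigenvectors in $N(0)$ (its action strictly raises the leading term). Both arguments are valid and of comparable length, and both finish with the same kind of witness that $M$ is not $W_1$-stable; your explicit witness $e_{-2}e_0^2 v_0 = v_{-1}+u_{-1} \notin M$ is correct and usefully fills in what the paper dismisses as ``easy to see.''

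Part (b) is where there is a genuine gap (the paper omits this proof, citing Bavula's Corollary 3.5, so there is nothing to compare against, but your sketch as written does not close). From $p v_0 = q v_0 = 0$ together with maximality of the left ideal $U(\gamma)p + U(\gamma)q$ you can conclude only that the cyclic submodule $U(\gamma)v_0 \subseteq N(\alpha)$ is simple and isomorphic to $U(\gamma)/(U(\gamma)p + U(\gamma)q)$; to get (b)(ii), and hence (b)(iii), you must additionally prove that $v_0$ generates $N(\alpha)$ over $\sll_2$. This surjectivity is not automatic: in part (a) the identical setup yields $U(0)v_0 = M$, a \emph{proper} submodule, so the ``a fortiori'' step is exactly where the hypothesis $\alpha \notin \tfrac{1}{2}\Z$ must enter (for instance, one checks that $e_{-1}^n v_0$ and $e_{-1}^n u_0$ have lowest terms which are nonzero multiples of $v_{-n}$ and $u_{-n}$ precisely because $\alpha \notin \Z$, which together with $N_+ \subseteq U(\gamma)v_0$ gives generation). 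Separately, your claimed verification of (b)(i) does not check out for $p$: with the paper's action one has $e_1 v_0 = \alpha v_1 + u_1$ while $e_0^2(e_0 + 1 - \alpha)v_0 = (2-\alpha)v_1 + u_1$, so $p v_0 = 2(\alpha - 1)v_1 \neq 0$ for the values of $\alpha$ at hand; the element that actually annihilates $v_0$ is $e_1 - e_0^2(e_0 + \alpha - 1)$ (the stated $p$ appears to carry a sign typo, which a genuine computation should have flagged rather than confirmed), whereas your verification of $q v_0 = 0$ is correct.
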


\begin{proof}
Let us prove part (a) of the theorem. It is straightforward to check that $(e_{-1} e_0 - 1)v_0 = 0$ in 
$N(0)$. By Corollary 3.9(b) in \cite{Ba}, the module $U(0) / U(0)s$ is a simple $\sll_2$-module. Since the Casimir element $C$ acts trivially on $N(0)$, we have a homomorphism of $\sll_2$-modules 
$\varphi: U(0) \rightarrow N(0)$, given by $\varphi(x) = x v_0$. Since $s$ annihilates $v_0$, the left ideal $U(0)s$ is in the kernel of $\varphi$ and we get a homomorphism
$$\overline\varphi : \,\,  U(0) / U(0) s \rightarrow N(0).$$
Since $U(0) / U(0) s$ is simple and $\overline\varphi (1) = v_0$, we conclude that $\overline\varphi$ is injective and its image is the $\sll_2$-submodule $M \subset N(0)$, generated by $v_0$.

Let us introduce a linear order on the basis elements of $N(0)$:
$$ \ldots < u_{-1} < v_0 < u_0 < v_1 < u_1 < v_2 < \ldots $$
This order defines the highest term and the lowest term for any non-zero element in $N(0)$. We also set
$N_+ = \Span \{ v_k, u_k \, | \, k \geq 0 \}$.

Consider the sequence $v_0, \, e_0 v_0, \,  e_0^2 v_0, \, e_0^3 v_0, \ldots$. All of these vectors are in $N_+$ and their leading terms are $v_0, u_0, v_1, u_1, \ldots$. Hence these vectors span $N_+$ and $N_+ \subset M\subset N(0)$. Since $N_+$ is invariant under the action of the Borel subalgebra spanned by $\{ e_0, e_1 \}$, we conclude that $M = \K[e_{-1}] N_+$. Moreover $e_{-1} v_k, e_{-1} u_m \in N_+$ for $k \geq 1$ and 
$m \geq 0$. Hence
$$ M = N_+ + \K[e_{-1}] v_0 .$$
The lowest terms of the vectors $e_{-1} v_0, \, e_{-1}^2 v_0, \, e_{-1}^3 v_0, \ldots$ are non-zero multiples
of $u_{-1}, u_{-2}, u_{-3}, \ldots$. Hence these vectors are linearly independent and we conclude that
$\left\{ v_0, e_0^n v_0, e_{-1}^n v_0 \, | \, n \geq 1 \right\}$ is a basis of $M$.
The images of the vectors $v_{-1}, v_{-2}, v_{-3}, \ldots$ form a basis of the quotient space $N(0) / M$.
It is easy to see that 
\begin{align*}
&e_0 v_{-1} = - v_{-1} \ \mod M, \\
&e_1 v_{-1} = 0  {\hskip 0.9cm} \mod M,
\end{align*}
and that the image of $v_{-1}$ generates $N(0) / M$ as an $\sll_2$-module. Since the Verma module for $\sll_2$ with the highest weight $-1$ is simple, we conclude that $N(0) / M$ is isomorphic to it.

Finally, let us show that $N(0)$ is simple as a $W_1$-module. We note that $e_1$ acts on $N(0)$ injectively and every non-zero element of $N(0)$ is moved into $N_+ \subset M$ by a high enough power of $e_1$. Hence every non-zero $\sll_2$ submodule in $N(0)$ contains $M$. Since $N(0)/M$ is simple, we conclude that $M$ is the only proper $\sll_2$-submodule in $N(0)$. It is easy to see that $M$ is not closed under the action of $W_1$, hence $N(0)$ is a simple $W_1$-module.
\end{proof}

 Part (b) of the theorem may be proved using Corollary 3.5 in \cite{Ba}. We omit this proof. 


\end{document}